\newtheorem{theorem}{Theorem}[section]
\newtheorem{definition}[theorem]{Definition}
\newtheorem{lemma}[theorem]{Lemma}
\newtheorem{corollary}[theorem]{Corollary}
\newtheorem{example}[theorem]{Example}
\newtheorem{remark}[theorem]{Remark}
\newproof{proof}{\it Proof}
\begin{document}

\begin{frontmatter}



\title{Parts  formulas involving the Fourier--Feynman transform 
associated with Gaussian process\\   on Wiener space}

\author[label3]{Seung Jun Chang}
\ead{sejchang@dankook.ac.kr}
\author[label3]{Jae Gil Choi\corref{cor1}}
\ead{jgchoi@dankook.ac.kr}

\address[label3]{Department of Mathematics,  
                 Dankook University,
                 Cheonan 330-714, 
                 Korea}
\cortext[cor1]{Corresponding author}

 
\begin{abstract}
In this paper,
using a very general Cameron--Storvick theorem on the Wiener space $C_0[0,T]$,
we establish various integration by parts formulas involving  generalized analytic 
Feynman integrals, generalized  analytic Fourier--Feynman transforms, and the first 
variation (associated with  Gaussian processes) of functionals $F$  on $C_0[0,T]$ 
having the form $F(x)=f(\langle{\alpha_1,x}\rangle, \ldots, \langle{\alpha_n,x}\rangle)$
for scale almost every $x\in C_0[0,T]$, where $\langle{\alpha,x}\rangle$  denotes 
the Paley--Wiener--Zygmund stochastic integral $\int_0^T \alpha(t)dx(t)$, 
and $\{\alpha_1,\ldots,\alpha_n\}$ is an orthogonal set of nonzero functions 
in $L_2[0,T]$. The Gaussian processes used in this paper are  not stationary.
\end{abstract}

\begin{keyword}
Cameron--Storvick theorem \sep 
Gaussian process \sep
generalized analytic Feynman integral \sep
generalized  analytic  Fourier--Feynman transform \sep
first  variation.

\vspace{.3cm}
\MSC[2010]  28C20 \sep 46G12 \sep 42B10 \sep 60G15   

\end{keyword}

\end{frontmatter}


\setcounter{equation}{0}
\section{Introduction}\label{sec:intro}

The theory of the Fourier--Wiener transform suggested by
Cameron and Martin \cite{Cameron45,CM45,CM47-Duke} about 75 years ago
now is  playing more and more  significant role in infinite dimensional analysis,
Feynman integration theory, and applications in mathematical physics.
The Fourier--Wiener transform and several analogies which are more exquisite 
have been improved in various research fields on infinite dimensional Banach spaces.
For instance, the analytic Fourier--Feynman transform 
\cite{Brue72,CS76,CKY00-IT,HPS95,HPS96,HPS01,JS79-MMJ,KK11,KYC10,KKPS99,PSS98-RCMP,PSS98-RMJ},
the sequential  Fourier--Feynman transform \cite{CS83,CS85,CCKSY08}, and 
the integral transform  \cite{CKY00-NFAO,CCS10,KKS04,Leeyj82,Leeyj87}   are developed by many authors.
Most of the topics are concentrated on classical and abstract  Wiener spaces.

\par 
Let $C_0[0,T]$ denote  the  one-parameter     Wiener space, that is, the 
space of  all  real-valued continuous functions $x$ on $[0,T]$ with $x(0)=0$.
Let $\mathcal{M}$ denote the class of  all Wiener measurable subsets of 
$C_0[0,T]$ and let $\mathfrak{m}$ denote  the  Wiener measure. Then, as  
it  is  well-known,  $(C_0[0,T],\mathcal{M},\mathfrak{m})$ is
a complete  measure space.

\par
In the theory of infinite dimensional analysis,
the integration by parts formula is also one
of the fundamental tools to analyze the integration of functionals on the 
infinite dimensional spaces.
In \cite{Cameron51}, Cameron  derived an integration by parts formula for the Wiener 
measure $\mathfrak{m}$. This is the first infinite dimensional integration by parts 
formula.   In \cite{Donsker}, Donsker also established this formula using a different 
method, and applied it to study Fr\'echet--Volterra differential equations.  The 
integration by  parts formula on $C_0[0,T]$  introduced in \cite{Cameron51} was 
improved in \cite{CS91,PS98-PanAmer,PSS98-RCMP} to study the  parts formulas involving the analytic 
Feynman integral and the analytic Fourier--Feynman transform (henceforth FFT). Since then the 
parts formula for the analytic Feynman integral is called the Cameron--Storvick 
theorem by many mathematicians.

\par
The concept of the generalized Wiener integral (namely,  the Wiener integral 
associated with Gaussian paths) and the generalized analytic Feynman integral 
(namely,  the analytic Feynman  integral  associated with  Gaussian paths) on 
$C_0[0,T]$ were introduced by Chung, Park and Skoug \cite{CPS93}, and further 
developed in \cite{PS91,PS95}. In \cite{CPS93,PS91,PS95}, the generalized Wiener 
integral  was defined by the Wiener integral
\begin{equation}\label{eq:idea01}
\int_{C_0[0,T]}F (\mathcal{Z}_h(x,\cdot) ) \mathfrak{m}(dx),
\end{equation}
where $\mathcal{Z}_h(x,\cdot)$ is a Gaussian path given by the stochastic integral 
\begin{equation}\label{eq:Z-process-initial}
\mathcal{Z}_h(x,t) =\int_0^th(s)dx(s) \mbox{ with  } h\in L_2[0,T].
\end{equation}
For a precise definition of this stochastic integral, see Section \ref{sec:pre} below.
Also the concept of the generalized analytic Feynman integral and the   generalized  
analytic FFT (henceforth GFFT) were more developed based on the  generalized Wiener integral \eqref{eq:idea01},
see \cite{CCKSY05,CC17-JMAA,CC18-CPAA,CC18-BJMA,CC19-sub,CCC17,CSC12,HPS97}. If we choose $h\equiv 1$ 
in \eqref{eq:Z-process-initial}, as a constant function,  the generalized Wiener integral 
\eqref{eq:idea01} reduces  an ordinary Wiener 
integral, i.e., 
\[
\int_{C_0[0,T]}F (\mathcal{Z}_1(x,\cdot) ) \mathfrak{m}(dx) 
=\int_{C_0[0,T]}F (x) \mathfrak{m}(dx) .
\]

 \par
The purpose of this paper is to establish various integration by parts formulas
involving the  generalized analytic Feynman integral 
and the  GFFT  of  functionals in non-stationary Gaussian paths $\mathcal Z_h(x,\cdot)$
given by \eqref{eq:Z-process-initial}.
In Section \ref{sec:remark} below we illustrate the importance of this topic and the
motivation of this paper.  

\par
The Wiener process used in
\cite{Brue72,Cameron45,Cameron51,CM45,CM47-Duke,CS76,CS83,CS85,CS91,CCKSY08,
CKY00-IT,CKY00-NFAO,CCS10,HPS95,HPS96,HPS01,JS79-MMJ,KK11,KKS04,KYC10,KKPS99,Leeyj82,Leeyj87, 
PS98-PanAmer,PSS98-RCMP,PSS98-RMJ} is a stationary 
process. However, the stochastic process $\mathcal Z_h$ on $C_0[0,T]$ used in this paper, 
as well as  in \cite{CCKSY05,CC17-JMAA,CC18-CPAA,CC18-BJMA,CC19-sub,CCC17,CSC12,CPS93,HPS97,PS91,PS95}, 
is non-stationary in time. 
Thus the results in this paper are quite a lot more complicated because the Gaussian processes used 
in this paper are  non-stationary  processes.  However, by choosing $h(t)\equiv 1$ on $[0,T]$
in \eqref{eq:Z-process-initial}, the process $\mathcal Z_1$  reduces to an ordinary 
Wiener process on $C_0[0,T]$, and so the expected results on $C_0[0,T]$ are immediate 
corollaries of the results in this
paper.


\setcounter{equation}{0}
\section{Preliminaries}\label{sec:pre}

\par
In this section we first present a brief background and some
well-known results about the Wiener  space $C_0[0,T]$.

\par
A subset $B$ of $C_0[0,T]$ is said to be scale-invariant measurable \cite{JS79-pacific}
provided $\rho B\in \mathcal{M}$ for all $\rho>0$, and a scale-invariant measurable
set $N$ is said to be scale-invariant null provided  $\mathfrak{m}(\rho N)=0$ for
all $\rho>0$. A property that holds except on  a scale-invariant null set is said
to  hold  scale-invariant almost everywhere (s-a.e.). A functional $F$ is said
to be scale-invariant measurable provided $F$ is defined on a scale-invariant
measurable set and $F(\rho\,\cdot\,)$ is Wiener-measurable for every $\rho> 0$.
If two functionals $F$ and $G$ are equal s-a.e.,  we write $F\approx G$. 

\par
The Paley--Wiener--Zygmund (henceforth PWZ) stochastic integral \cite{PWZ33} plays a key
role throughout this paper. Let $\{\phi_n\}_{n=1}^{\infty}$ be a complete orthonormal
set in $L_2[0,T]$, each of whose elements is  of bounded variation on $[0,T]$. Then
for each $v\in L_2[0,T]$, the PWZ stochastic integral $\langle{v,x}\rangle$  is
defined by the formula
\[
\langle{v,x}\rangle
=\lim\limits_{n\to \infty}
\int_0^T\sum\limits_{j=1}^n
(v,\phi_j)_2 \phi_j(t)d x(t)
\]
for all $x\in C_0[0,T]$ for which the limit exists, where $(\cdot,\cdot)_2$
denotes the $L_2$-inner product. For each  $v\in L_2[0,T]$, the limit defining
the PWZ  stochastic  integral  $\langle{v,x}\rangle$ is essentially independent
of the choice of the complete  orthonormal set  $\{\phi_n\}_{n=1}^{\infty}$
and it exists for s-a.e. $x\in C_0[0,T]$.
If $v$ is of bounded variation on $[0,T]$  then $\langle{v,x}\rangle$ equals the
Riemann--Stieltjes integral $\int_0^T v(t)dx(t)$  for s-a.e. $x\in C_0 [0,T]$,
and   for  each  $v$  in $L_2[0,T]$,  
$\langle{v,\cdot}\rangle$  is a  Gaussian random variable on $C_0[0,T]$ 
with mean zero and variance $\|v\|_2^2$. If $\{\alpha_1,\ldots,\alpha_n\}$ is 
an orthogonal set of functions in $L_2[0,T]$, then the random variables, 
$\langle{\alpha_j,x}\rangle$'s, are independent. For a more  detailed study of the PWZ 
stochastic integral, see \cite{JS81-JFA,PS88}.

\par
Throughout this paper we let
\[
\begin{aligned}
\mathrm{Supp}_2[0,T]
&=\{h\in L_2[0,T]:  m_L (\mathrm{supp}(h))=T\}\\
&=\{h\in L_2[0,T]:  h\ne 0\,\,\,\, m_L\mbox{-a.e. on } [0,T]\}
\end{aligned}
\]
and
\[
\mathrm{Supp}_{\infty}[0,T]
=\{h\in L_{\infty}[0,T]:  h\ne 0\,\,\,\, m_L\mbox{-a.e. on } [0,T]\}
\]
where  $m_L$ denotes  Lebesgue measure on $[0,T]$.
We note that $\mathrm{Supp}_{\infty}[0,T] \subset\mathrm{Supp}_{2}[0,T]$,
and for any $h\in \mathrm{Supp}_{2}[0,T]$, $\|h\|_2>0$.

\par
Given any function  $h$ in $\mathrm{Supp}_2[0,T]$,
let $\mathcal{Z}_h: C_0[0,T]\times[0,T]\to\mathbb R$
be the stochastic process given by
\begin{equation}\label{eq:g-process}
\mathcal{Z}_h(x,t)
=\langle{h\chi_{[0,t]},x}\rangle,
\end{equation}
where $\chi_{[0,t]}$ denotes the indicator function of
the set $[0,t]$. Next, let $\beta_h(t) =\int_0^t h^2(u)du$.
The stochastic process  $\mathcal{Z}_h$ on $C_0[0,T]\times[0,T]$
is a Gaussian process with mean zero and covariance function
\[
\int_{C_0[0,T]} \mathcal{Z}_h(x,s)\mathcal{Z}_h(x,t)\mathfrak{m}(dx)
=\beta_h({\min \{s,t\}}) .
\]
In addition, by \cite[Theorem 21.1]{Yeh73}, $\mathcal{Z}_h (\cdot, t)$
is stochastically  continuous in $t$ on $[0,T]$.  Also, for any
$h_1,h_2 \in\mathrm{Supp}_{2}[0,T]$,
\[
\int_{C_0[0,T]}
\mathcal{Z}_{h_1}(x,s)\mathcal{Z}_{h_2}(x,t)
\mathfrak{m}(dx)
=\int_{0}^{\min\{s,t\}}h_1(u)h_2(u) d u.
\]
Of course, as discussed in Section \ref{sec:intro} above, if $h(t)\equiv 1$ on $[0,T]$,
then the process $\mathcal W$ on $C_0[0,T]\times[0,T]$ given by
$(w,t)\stackrel{\mathcal W}{\longrightarrow}
\mathcal W_t(x)=\mathcal  Z_1 (x,t)=x(t)$
is a Wiener process (standard Brownian motion).
We note that the coordinate  process $\mathcal Z_1$ is stationary
in time, whereas the stochastic process  $\mathcal{Z}_h$ generally
is not. For more detailed studies on the stochastic process  $\mathcal{Z}_h$,
see \cite{CCKSY05,CC17-JMAA,CC18-CPAA,CC18-BJMA,CC19-sub,CCC17,CSC12,CPS93,HPS97,PS91,PS95}.

\par
 If $h\in\mathrm{Supp}_2[0,T] \cap BV[0,T]$, then for all $x\in C_0[0,T]$,
$\mathcal{Z}_h(x,t)$ is continuous in $t$. 
From the definition of the PWZ stochastic integral, it follows  that for each  
$v\in L_2[0,T]$  and  each  $h\in \mathrm{Supp}_{\infty}[0,T]$, 
\begin{equation}\label{eq:basic-rel}
\langle{v,\mathcal{Z}_{h}(x,\cdot)}\rangle =\langle{vh,x}\rangle
\end{equation}
for s-a.e. $x\in C_{0}[0,T]$. Thus, throughout this paper, we require $h$ 
to be in $\mathrm{Supp}_{\infty}[0,T]$ rather than simply in $\mathrm{Supp}_{2}[0,T]$. 
 
\par
Let $\mathbb C$, $\mathbb C_+$ and $\mathbb{\widetilde C}_+$ denote
the set of  complex numbers, complex numbers with positive real part
and nonzero complex numbers with nonnegative real part, respectively.
For each $\lambda \in \mathbb C$, $\lambda^{1/2}$ denotes the principal
square root of $\lambda$; i.e., $\lambda^{1/2}$ is always chosen to have
positive real part, so that
$\lambda^{-1/2}=(\lambda^{-1})^{1/2}$ is  in $\mathbb C_+$ for
all $\lambda\in\widetilde{\mathbb C}_+$.

\begin{definition}\label{def:Faynman}
Let $h$ be a function in $\mathrm{Supp}_2[0,T]$ and let $F$ be
a $\mathbb C$-valued scale-invariant measurable functional
on $C_0[0,T]$  such that
\[
\int_{C_0[0,T]}
F(\lambda^{-1/2}\mathcal Z_h(x,\cdot))\mathfrak{m}(dx)
=J(h;\lambda)
\]
exists as a finite number for all $\lambda>0$. If there exists
a function $J^* (h;\lambda)$ analytic on $\mathbb C_+$ such that
$J^*(h;\lambda)=J(h;\lambda)$ for all $\lambda>0$, then  $J^*(h;\lambda)$
is defined to be the generalized analytic  Wiener integral (associated
with the Gaussian paths   $\mathcal{Z}_h(x,\cdot)$) of $F$  over $C_0[0,T]$ with
parameter $\lambda$, and for $\lambda \in \mathbb C_+$ we write
\[
\int_{C_0[0,T]}^{\text{\rm anw}_{\lambda}}
F(\mathcal{Z}_h(x,\cdot))\mathfrak{m}(dx)
= J^*(h;\lambda).
\]
Let $q\ne 0$ be a real number and let $F$ be a functional such that
the generalized analytic  Wiener integral,
$\int_{C_0[0,T]}^{\text{\rm anw}_{\lambda}}
F(\mathcal{Z}_h(x,\cdot))\mathfrak{m}(dx)$,
exists for all $\lambda \in \mathbb C_+$. If the following limit
exists, we call it the generalized analytic Feynman integral (associated
with the Gaussian paths   $\mathcal{Z}_h(x,\cdot)$) of $F$ with parameter $q$ and
we  write
\begin{equation}\label{eq:zhFint}
\int_{C_0[0,T]}^{\mathrm{anf}_{q}}
F(\mathcal{Z}_h(x,\cdot))d\mathfrak m(x)
= \lim_{\substack{
\lambda\to -iq \\ \,\, \lambda\in \mathbb C_+}}
\int_{C_0[0,T]}^{\mathrm{anw}_{\lambda}}
F(\mathcal{Z}_h(x,\cdot))\mathfrak{m}(dx).
\end{equation}
\end{definition}

\par 
Next (see \cite{CC17-JMAA,CC18-BJMA,CC19-sub,CCC17,CSC12,HPS97}) we  state the definition of
the $L_p$ analytic $\mathcal Z_h$-GFFT
(namely, the analytic GFFT associated with the Gaussian paths 
$\mathcal Z_h(x,\cdot)$).

\begin{definition} \label{def:tpq}
Let $h$ be a nonzero function in $\mathrm{Supp}_2[0,T]$.  
For $\lambda\in\mathbb{C}_+$ and $y \in C_{0}[0,T]$, let
\[
T_{\lambda,h}(F)(y)
=\int_{C_0[0,T]}^{\text{\rm anw}_{\lambda}} 
F(y+\mathcal{Z}_h(x,\cdot)) \mathfrak{m}(dx). 
\]
For $p\in (1,2]$ we define the $L_p$ analytic $\mathcal Z_h$-GFFT, 
$T^{(p)}_{q,h}(F)$ of $F$, by the formula,
\[
T^{(p)}_{q,h}(F)(y)
=\operatorname*{l.i.m.}_{\substack{
\lambda\to -iq \\ \,\, \lambda\in \mathbb C_+}}
T_{\lambda,h} (F)(y)    				 
\]
if it exists; i.e.,  for each $\rho>0$,
\[
\lim_{\substack{
\lambda\to -iq \\ \,\, \lambda\in \mathbb C_+}}
\int_{C_{0}[0,T]}\big| T_{\lambda,h} (F)(\rho y)
   -T^{(p)}_{q, h }(F)(\rho y) \big|^{p'} 
 \mathfrak m (dy)=0
\]
where $1/p+1/p' =1$. We define the $L_1$ analytic $\mathcal Z_h$-GFFT, 
$T_{q, h }^{(1)}(F)$ of  $F$  by  the formula  
\[
T_{q, h }^{(1)}(F)(y)
= \lim_{\substack{
\lambda\to -iq \\ \,\, \lambda\in \mathbb C_+}}
T_{\lambda,h} (F)(y)
\]
for s-a.e. $y\in C_0[0,T]$ whenever this limit exists.
\end{definition}

\par
We note that for $p \in [1,2]$, $T_{q,h}^{(p)}(F)$ is  defined only s-a.e..
We also note that if $T_{q,h}^{(p)}(F)$  exists  and if $F\approx G$, then 
$T_{q,h}^{(p)}(G)$ exists  and  $T_{q,h}^{(p)}(G)\approx T_{q,h }^{(p)}(F)$.
One can see that for each $h\in \mathrm{Supp}_2[0,T]$, 
$T_{q,h}^{(p)}(F)\approx T_{q,-h}^{(p)}(F)$,
since 
\begin{equation*}\label{symmetric}
\int_{C_0[0,T]}F(x) \mathfrak{m}(dx)=\int_{C_0[0,T]}F(-x) \mathfrak{m}(dx).
\end{equation*}

\begin{remark}\label{remark:ordinary-fft}
Note that if $h(t)\equiv 1$ on $[0,T]$, 
then $\mathcal{Z}_h (x,t)=x(t)$ for all $x\in C_0[0,T]$.
In this case the generalized analytic Feynman integral given by 
equation \eqref{eq:zhFint} above and the  $L_p$ analytic  
$\mathcal Z_1$-GFFT, $T_{q,1}^{(p)}(F)$, agree with the previous 
definitions of the analytic Feynman integral and the analytic 
FFT, $T_{q}^{(p)}(F)$, see  \cite{Brue72,CS76,CS91,CKY00-IT,HPS95,HPS96,HPS01,JS79-MMJ,JS81-Pacific,%
JS81-JFA,KK09,KYC10,KKPS99,PSS98-RCMP,PSS98-RMJ}.
\end{remark}

\par
Next we give the definition of the first variation $\delta F$ of
a functional $F$. The following definition is due to by Chang, Cho,
Kim, Song and Yoo \cite{CCKSY05}.

\begin{definition} \label{def:1st-var}
Let $h_1$   and $h_2$ be  nonzero functions in $\mathrm{Supp}_2[0,T]$, let $F$ be 
a Wiener measurable functional on $C_{0}[0,T]$,
and let $w \in C_{0}[0,T]$. Then
\begin{equation}\label{eq:1st}
\begin{aligned}
\delta_{h_1,h_2} F(x|w)
&\equiv \delta  F(\mathcal Z_{h_1}(x,\cdot)|\mathcal Z_{h_2}(w,\cdot))\\
&=\frac{\partial}{\partial \mu}
F(\mathcal Z_{h_1}(x,\cdot)+\mu \mathcal Z_{h_2}(w,\cdot)) \bigg|_{\mu=0}
\end{aligned}
\end{equation}
(if it exists) is called the first variation of $F$ in the direction $w$.
\end{definition}

\begin{remark}
Setting $h_1 = h_2 \equiv  1$ on $[0,T]$, our definition of the first variation  reduces to 
the first variation studied in  \cite{Cameron51,CS91,CKY00-IT,KK09,KKS04,KKPS99,PSS98-RCMP,PSS98-RMJ}. 
That is,
\[
\delta_{1,1}F(x|w) = \delta F(x|w).
\]
\end{remark}

\par
Throughout this paper we shall always choose $w$ to be an
element of  $C_0'[0,T]$ where
\begin{equation}\label{eq:CMspace}
 \begin{aligned}
&C_0'[0,T] \\
&=\{w\in C_0[0,T]: w \mbox{ is absolutely continuous on  } [0,T]
\mbox{ with } w' \in L_2[0,T] \}.
 \end{aligned}
\end{equation}

\setcounter{equation}{0}
\section{Remark on the topic of this paper}\label{sec:remark}

 \subsection{A short survey of Cameron--Storvick theorem}\label{subsec:A}

\par
In \cite{Cameron51},  Cameron introduced the first variation (a kind of
G\^ateaux  derivative) of functionals  on  $C_0[0,T]$ and obtained a
formula involving  the Wiener integral of the first variation. In \cite{CS91},
Cameron and Storvick   established  a similar result for  the analytic
Feynman integral of  functionals on $C_0[0,T]$. They also applied their
celebrated result to establish the existence of the analytic Feynman
integral of unbounded  functionals on $C_0[0,T]$.  
We start  this section by stating the original   Cameron--Storvick theorem 
and the parts formula for the analytic Feynman integral  of functionals $F$ on $C_0[0,T]$.
To do this, in this section we consider the  ordinary `analytic Feynman integral'
(namely, the Feynman integral  associated with the Gaussian paths  $\mathcal{Z}_1(x,\cdot)$),
$\int_{C_0[0,T]}^{\mathrm{anf}_q}F(x)   \frak{m}(dx)$,  and
the Cameron--Storvick's  first variation, $\delta F \equiv \delta_{1,1}F$, 
for functionals $F$ on $C_0[0,T]$.

\begin{theorem} Let $z\in L_2[0,T]$ and let $w(t)=\int_0^t z(s)ds$.
For each $\rho>0$, let $F(\rho x)$ be Wiener integrable on $C_0[0,T]$ and let 
$F(\rho x)$ have a first variation $\delta  F(\rho x| \rho w)$ for all $x \in C_0[0,T]$
such that for some positive function $\eta (\rho)$,
\[
\sup_{|h|\le \eta(\rho)} |\delta  F(\rho x +\rho h w| \rho w)| 
\]
is Wiener integrable. Then if either member of the following equation
exists, both analytic Feynman integrals below exist, and for each
$q \in \mathbb R \setminus \{0\}$,
\begin{equation}\label{eq:CS-origin}
\int_{C_0[0,T]}^{\mathrm{anf}_q}\delta F(x|w) \mathfrak{m}(dx)
=-iq\int_{C_0[0,T]}^{\mathrm{anf}_q}  \langle{z,x}\rangle F(x)   \mathfrak{m}(dx).
\end{equation}
\end{theorem}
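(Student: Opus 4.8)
The plan is to derive \eqref{eq:CS-origin} from the Cameron--Martin translation theorem on $C_0[0,T]$ by differentiating a one--parameter family at the translation parameter $\mu=0$. Fix $\lambda>0$ and write $\rho=\lambda^{-1/2}$. By hypothesis $F(\rho\,\cdot\,)\in L_1(\mathfrak m)$, the first variation $\delta F(\rho x|\rho w)$ exists for every $x\in C_0[0,T]$, and $\Phi_\rho(x):=\sup_{|h|\le\eta(\rho)}|\delta F(\rho x+\rho h w|\rho w)|$ is Wiener integrable. For $\mu\in\mathbb R$ set $I_\lambda(\mu)=\int_{C_0[0,T]}F(\lambda^{-1/2}(x+\mu w))\,\mathfrak m(dx)$. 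Since $w(t)=\int_0^t z(s)\,ds$ with $z\in L_2[0,T]$, the Cameron--Martin translation theorem applied to the Wiener integrable functional $x\mapsto F(\lambda^{-1/2}x)$ gives, for every $\mu$,
\[
I_\lambda(\mu)=\exp\!\Big(-\tfrac{\mu^2}{2}\|z\|_2^2\Big)\int_{C_0[0,T]}F(\lambda^{-1/2}x)\,\exp(\mu\langle z,x\rangle)\,\mathfrak m(dx),
\]
and, applying the same theorem to $x\mapsto|F(\lambda^{-1/2}x)|$, the integral on the right is absolutely convergent for every $\mu\in\mathbb R$.

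Next I would differentiate both representations of $I_\lambda$ at $\mu=0$. On the left, the difference quotient $\mu^{-1}\big(F(\lambda^{-1/2}(x+\mu w))-F(\lambda^{-1/2}x)\big)$ tends, by the definition of the first variation, to $\delta F(\lambda^{-1/2}x\,|\,\lambda^{-1/2}w)=\lambda^{-1/2}\delta F(\lambda^{-1/2}x|w)$ (the last equality by homogeneity of the variation in its direction), while the mean value theorem in $\mu$ shows it equals $\delta F(\rho x+\rho\xi w\,|\,\rho w)$ for some $\xi$ between $0$ and $\mu$; hence for $|\mu|\le\eta(\rho)$ it is dominated by $\Phi_\rho\in L_1(\mathfrak m)$. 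Dominated convergence then shows $I_\lambda'(0)$ exists and $I_\lambda'(0)=\lambda^{-1/2}\int_{C_0[0,T]}\delta F(\lambda^{-1/2}x|w)\,\mathfrak m(dx)$. On the right, $\frac{d}{d\mu}\exp(-\tfrac{\mu^2}{2}\|z\|_2^2)$ vanishes at $\mu=0$, and the difference quotient of $F(\lambda^{-1/2}x)\exp(\mu\langle z,x\rangle)$ at $\mu=0$ is, by the mean value theorem, bounded by $|F(\lambda^{-1/2}x)|\,|\langle z,x\rangle|\,e^{|\mu|\,|\langle z,x\rangle|}\le |\mu|^{-1}|F(\lambda^{-1/2}x)|\big(e^{2\mu\langle z,x\rangle}+e^{-2\mu\langle z,x\rangle}\big)$, which for fixed small $|\mu|$ is Wiener integrable by the absolute convergence noted above; so one may differentiate under the integral sign and $I_\lambda'(0)=\int_{C_0[0,T]}F(\lambda^{-1/2}x)\langle z,x\rangle\,\mathfrak m(dx)$. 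Equating the two formulas for $I_\lambda'(0)$ and using the homogeneity of the PWZ integral, $\langle z,x\rangle=\lambda^{1/2}\langle z,\lambda^{-1/2}x\rangle$, we obtain, for every $\lambda>0$,
\[
\int_{C_0[0,T]}\delta F(\lambda^{-1/2}x|w)\,\mathfrak m(dx)=\lambda\int_{C_0[0,T]}\langle z,\lambda^{-1/2}x\rangle\,F(\lambda^{-1/2}x)\,\mathfrak m(dx).
\]

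It remains to pass to analytic functions of $\lambda$ and then to the Feynman limit. By Definition~\ref{def:Faynman} with $h\equiv1$ (so $\mathcal Z_1(x,\cdot)=x$), the left-hand side above is the value at $\lambda$ of the function $J(1;\lambda)$ attached to the functional $\delta F(\cdot|w)$, while the integral on the right is $J(1;\lambda)$ for the functional $\langle z,\cdot\rangle F$; these two real-analytic functions of $\lambda\in(0,\infty)$ differ only by the factor $\lambda$, which extends to a zero-free analytic function on $\mathbb C_+$. Hence one of them admits an analytic continuation to $\mathbb C_+$ if and only if the other does, and by the identity theorem the relation persists as
\[
\int_{C_0[0,T]}^{\mathrm{anw}_\lambda}\delta F(x|w)\,\mathfrak m(dx)=\lambda\int_{C_0[0,T]}^{\mathrm{anw}_\lambda}\langle z,x\rangle F(x)\,\mathfrak m(dx),\qquad\lambda\in\mathbb C_+.
\]
Letting $\lambda\to-iq$ through $\mathbb C_+$ and using that $\lambda\mapsto\lambda$ is continuous and nonzero at $-iq$, the existence of one of the limits in the sense of \eqref{eq:zhFint} forces the existence of the other, and \eqref{eq:CS-origin} follows. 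The main obstacle is the second paragraph: extracting from the single hypothesis that $\sup_{|h|\le\eta(\rho)}|\delta F(\rho x+\rho h w|\rho w)|$ is Wiener integrable a legitimate passage of $\frac{d}{d\mu}$ inside the integral on both sides; once that is in place, the translation theorem and the identity-theorem argument complete the proof (this is essentially the Cameron--Storvick argument of \cite{CS91}).
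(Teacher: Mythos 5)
The paper itself offers no proof of this statement: it is quoted as the classical Cameron--Storvick theorem with a pointer to \cite{Cameron51} and \cite{CS91}, and is later observed to be the special case $h_1=h_2\equiv 1$ of the generalized version (Theorem \ref{byparts-step1}, taken from \cite{CC18-CPAA}). So there is nothing in the paper to compare against line by line; what you have written is, in substance, the original Cameron/Cameron--Storvick argument, and it is correct. Your chain --- translation theorem applied to $x\mapsto F(\lambda^{-1/2}x)$ to get the two representations of $I_\lambda(\mu)$, differentiation at $\mu=0$ with the left side dominated by $\Phi_\rho$ via the mean value theorem, the homogeneity $\delta F(\rho x|\rho w)=\rho\,\delta F(\rho x|w)$ and $\langle z,x\rangle=\lambda^{1/2}\langle z,\lambda^{-1/2}x\rangle$ to land on $\int\delta F(\lambda^{-1/2}x|w)\,\mathfrak m(dx)=\lambda\int\langle z,\lambda^{-1/2}x\rangle F(\lambda^{-1/2}x)\,\mathfrak m(dx)$, and then multiplication by the zero-free analytic function $\lambda$ plus uniqueness of analytic continuation to pass to $\mathrm{anw}_\lambda$ and finally to $\mathrm{anf}_q$ --- gives exactly \eqref{eq:CS-origin}, and a sanity check with $F(x)=\langle z,x\rangle$, $\|z\|_2=1$ confirms the sign. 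Two small inaccuracies worth fixing: (i) absolute convergence of $\int F(\lambda^{-1/2}x)e^{\mu\langle z,x\rangle}\,\mathfrak m(dx)$ is not available ``for every $\mu\in\mathbb R$'' from the hypotheses; it follows only for $|\mu|\le\eta(\rho)$, via $|F(\rho x+\rho\mu w)|\le|F(\rho x)|+|\mu|\Phi_\rho(x)$ and the translation theorem applied to $|F(\rho\,\cdot)|$ --- and your domination of the difference quotient by $e^{\pm 2\mu\langle z,x\rangle}$ then requires restricting to $|\mu|\le\eta(\rho)/2$, which is all you need at $\mu=0$; (ii) the two functions of $\lambda$ on $(0,\infty)$ are not known to be real-analytic and need not be; the identity-theorem step should be phrased as: if one functional admits an analytic Wiener integral $J^*$ on $\mathbb C_+$, then $\lambda^{\pm1}J^*(\lambda)$ is an analytic extension of the other's $J$, and such extensions are unique because $(0,\infty)$ has limit points in $\mathbb C_+$. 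With those repairs the argument is complete and is the standard route; the paper's alternative route would be to specialize Theorem \ref{byparts-step1}, which it takes as given from \cite{CC18-CPAA} rather than proving.
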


\begin{remark}
In \cite{CS91}, Cameron and Storvick require $z$ to be ``essentially of bounded 
variation'', but as was pointed by Cameron \cite[p.915]{Cameron51} this requirement 
can be replaced by the requirement that $z$ be ``of class $L_2[0,T]$'' since all of 
our Stieltjes integrals are interpreted  
as Paley--Wiener--Zygmund integrals.
\end{remark}

The following integration by parts formula and further applications are investigated 
in many previous researches. For instance, see \cite{KK09,PS98-PanAmer,PSS98-RCMP}.


\begin{theorem}
Let $w$ be a function in $C_0[0,T]$, and let $F$ and $G$ be  scale-invariant 
measurable functionals on $C_0[0,T]$.
Assume that the   first variations in the following equations all exist.
Then it follows that 
\begin{equation}\label{eq:siple01}
\begin{aligned}
&\int_{C_0[0,T]}^{\mathrm{anf}_q}
\big[F(x)\delta G (x| w) +\delta F (x|w) G (x)\big]\mathfrak{m}(dx)\\
&\stackrel{*}{=}-iq \int_{C_0[0,T]}^{\mathrm{anf}_q} \langle{z,x}\rangle
F(x)G (x) \mathfrak{m}(dx).
\end{aligned}
\end{equation}
where by $\stackrel{*}{=}$ we mean  that if
either side exists, both side exist and equality holds.
\end{theorem}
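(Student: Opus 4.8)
The plan is to obtain \eqref{eq:siple01} as a one-line consequence of the Cameron--Storvick theorem \eqref{eq:CS-origin}, applied not to $F$ or $G$ separately but to the single functional $H\equiv FG$, combined with the Leibniz rule for the first variation. As is implicit in the right-hand side of \eqref{eq:siple01} (and consistent with the standing convention that $w$ be taken in $C_0'[0,T]$; see \eqref{eq:CMspace}), I take $w\in C_0'[0,T]$ and $z=w'\in L_2[0,T]$, so that $w(t)=\int_0^t z(s)\,ds$ and $\langle{z,x}\rangle$ is meaningful.

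First I would record the product rule for the first variation: if $\delta F(x|w)$ and $\delta G(x|w)$ both exist, then $\delta(FG)(x|w)$ exists and
\[
\delta (FG)(x|w)=F(x)\,\delta G(x|w)+\delta F(x|w)\,G(x).
\]
This is immediate from Definition~\ref{def:1st-var}: the map $\mu\mapsto F(x+\mu w)\,G(x+\mu w)$ is the pointwise product of two $\mathbb{C}$-valued functions of the real variable $\mu$ that are differentiable at $\mu=0$, so the ordinary one-variable product rule applies, and evaluating the derivative at $\mu=0$ gives the displayed identity. The same computation holds with $x$ replaced by any translate $\rho x+\rho h w$, which is exactly what is needed to substitute $H=FG$ into \eqref{eq:CS-origin}.

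Next I would feed $H=FG$ into the Cameron--Storvick theorem. Under the standing hypotheses (Wiener integrability of the scaled functionals and existence of the relevant first variations), the dominating-function requirement of \eqref{eq:CS-origin} for the product --- Wiener integrability of $\sup_{|h|\le\eta(\rho)}\big|\delta(FG)(\rho x+\rho h w|\rho w)\big|$ --- is obtained from the corresponding requirements for $F$ and $G$ by expanding via the product rule and estimating the three resulting factors by the triangle and Cauchy--Schwarz inequalities; likewise Wiener integrability of $F(\rho x)G(\rho x)$ is among the assumptions. Then \eqref{eq:CS-origin} gives
\[
\int_{C_0[0,T]}^{\mathrm{anf}_q}\delta(FG)(x|w)\,\mathfrak{m}(dx)
=-iq\int_{C_0[0,T]}^{\mathrm{anf}_q}\langle{z,x}\rangle\,F(x)G(x)\,\mathfrak{m}(dx),
\]
and substituting the product rule on the left-hand side yields \eqref{eq:siple01}. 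The symbol $\stackrel{*}{=}$ is then justified by the analogous clause in \eqref{eq:CS-origin}: the integrands $\delta(FG)(x|w)$ and $F(x)\delta G(x|w)+\delta F(x|w)G(x)$ agree s-a.e., so existence of either side of \eqref{eq:siple01} is equivalent to existence of the generalized analytic Feynman integral of $\delta(FG)(\cdot|w)$, hence --- by \eqref{eq:CS-origin} --- to existence of the right-hand side.

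The main obstacle is not the algebra but the bookkeeping of hypotheses: one must verify that the pointwise domination hypothesis demanded by \eqref{eq:CS-origin} for $FG$ genuinely follows from (rather than merely resembles) the hypotheses imposed on $F$ and $G$ individually, and that $\delta F$ and $\delta G$ exist not only at the base point $x$ but at all nearby translates $\rho x+\rho h w$. If instead one invokes the paper's ``very general'' Cameron--Storvick theorem --- in which the pointwise domination condition is relaxed to a single scale-invariant integrability condition --- the argument is unchanged, with that theorem replacing \eqref{eq:CS-origin}, and the hypothesis-matching step becomes routine.
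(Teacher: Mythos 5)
Your proposal is correct and follows essentially the same route the paper itself indicates: the paper does not give a detailed proof of this statement but, in the paragraph immediately following it, derives \eqref{eq:siple01} exactly as you do --- by applying the Cameron--Storvick theorem \eqref{eq:CS-origin} to the product $FG$ together with the Leibniz rule $\delta(FG)(x|w)=F(x)\delta G(x|w)+\delta F(x|w)G(x)$, deferring the verification of the integrability and existence hypotheses to the cited references. Your additional bookkeeping of the domination hypothesis for $FG$ is a reasonable elaboration of the same argument rather than a different approach.
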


Using  a heuristic use of the Cameron--Storvick theorem (namely equation \eqref{eq:CS-origin}),
equation   \eqref{eq:siple01} 
is a simple consequence, because 
\[
\delta (FG)(x|w)=F(x)\delta   G (x|w)+\delta   F (x|w) G(x)
\] 
for almost all functionals $F$ and $G$ on $C_0[0,T]$.
Thus, to establish the equality in \eqref{eq:siple01}, 
the authors of the papers \cite{KK09,PS98-PanAmer,PSS98-RCMP} 
guaranteed the existences of the Feynman integrals and the first variations  
in the corresponding formulas to the equation   \eqref{eq:siple01}.
But the singularities of the  Wiener measure \cite{Cameron54,CM47-bams,JS79-pacific}, and  
the unusual behaviors of the  analytic  Feynman integral and the  analytic FFT  \cite{CS76,JS79-MMJ,JS81-Pacific}  
of functionals on $C_0[0,T]$ make  establishing various integration by parts formulas
involving  the Feynman integral and the FFT very difficult.
These  are due to the fact that the Wiener measure $\mathfrak{m}$
is not a  quasi-invariant probability measure.
It is well known that there is no quasi-invariant measure on infinite 
dimensional linear spaces (see  \cite{Yamasaki}).
Thus the translation theorem (Cameron--Martin theorem) and the Girsanov theorem on 
infinite dimensional Banach  spaces have been studied in the literature.
An essential structure hidden in the proof of the  Cameron--Storvick theorem
is based on the  Cameron--Martin translation theorem on Wiener space $C_0[0,T]$.

\subsection{Why do we use the Gaussian processes  defining the GFFT ?}

We consider the class of the (ordinary) $L_p$ analytic FFTs, $\{T_{q}^{(p)}\}_{q\in \mathbb R}$,
where the FFT $T_{0}^{(p)}$ with parameter $q=0$ denotes the identity transform,
i.e., $T_{0}^{(p)}(F) =F$ for functionals $F$ on $C_0[0,T]$.  
Then the class $\{T_{q}^{(p)}\}_{q\in \mathbb R}$ of the $L_p$ analytic FFTs
forms a commutative group  acting on various large classes of functionals on $C_0[0,T]$. 
We refer to the article \cite{HPS01} for a more detailed study of this topic.
In fact, in  \cite{HPS01}, Huffman, Park and Skoug presented the results with the class of 
the $L_1$ analytic FFTs, $\{T_{q}^{(1)}\}_{q\in \mathbb R}$, to furnish  simple illustrations
of the algebraic structure of the classes of  FFTs.
But, as commended in \cite{HPS01},  most of the results hold for the class of 
the $L_p$ FFTs with $p\in[1,2]$.

On the other hand, in \cite{CC17-JMAA,CSC12}, Chang, Choi and Skoug discovered 
new algebraic structures of the classes of the GFFT associated Gaussian processes.
Furthermore, in  \cite{CC19-sub,CCC17}, the authors investigated various relationships between 
 the GFFT  and the corresponding convolution products. 
There are many improvements and applications of subjects involving the concepts of the GFFT.
As a natural consequence work, it would be interesting to determine if the relationship between the ordinary 
FFT and the first variation could be extended to the case of the relationship between the GFFT 
 and the general first variation defined by \eqref{eq:1st}.
Thus, in this paper  we also  study other properties of the GFFT together
with the generalized first variation.

As discussed  above, the essential structure of parts formulas on Wiener space  
is based on the Cameron--Storvick theorem. Thus, to establish our parts formulas 
involving the generalized Feynman integral and the GFFT, we will present   a more  
general   Cameron--Storvick theorem   using the above notation.

\begin{theorem}[\cite{CC18-CPAA}]\label{byparts-step1}
Let  $h_1$ and $h_2$ be  functions in   $\text{\rm Supp}_2[0,T]$   and given
$z\in BV[0,T]$, let $w_{z h_1} \in C_0'[0,T]$ be defined by
\begin{equation}\label{eq:function-w-phih} 
w_{z h_1}(t)=\int_{0}^tz(s) h_1(s)ds.
\end{equation}
Let $F$ be a functional on $C_{0}[0,T]$ such that
$F(\mathcal Z_{h_1}(x,\cdot))$ is Wiener integrable over $C_{0}[0,T]$.
 Furthermore assume that for each $\rho>0$,
\begin{equation}\label{step1-condition}
\int_{C_{0}[0,T]}
\big|\delta  F(\rho \mathcal Z_{h_1}(x,\cdot)
|\rho\mathcal Z_{h_2}(w_{z h_1},\cdot))\big|\mathfrak{m}(dx)<+\infty.
\end{equation}
Then, 
 if either member of  the
following equation exists, both  generalized analytic Feynman integrals
below exist, and for each $q\in \mathbb R\setminus\{0\}$,
\begin{equation*}\label{eq:byparts-step3-show-01}
\begin{aligned}
&\int_{C_{0}[0,T]}^{\text{\rm anf}_q}
\delta F( \mathcal Z_{h_1} (x,\cdot)| \mathcal Z_{h_2}(w_{z h_1},\cdot))
\mathfrak{m}(dx)\\
&=-iq\int_{C_{0}[0,T]}^{\text{\rm anf}_q}
\langle{z,\mathcal Z_{h_2}(x,\cdot)}\rangle
F( \mathcal Z_{h_1}(x,\cdot))  \mathfrak{m}(dx).
\end{aligned}
\end{equation*}
\end{theorem}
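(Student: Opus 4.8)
The plan is to reduce Theorem \ref{byparts-step1} to the classical Cameron--Storvick theorem (equation \eqref{eq:CS-origin}) by \emph{transporting the problem along the Gaussian process} $\mathcal Z_{h_1}$. The key algebraic observation is the relation \eqref{eq:basic-rel}: for $v\in L_2[0,T]$ and $h\in\mathrm{Supp}_\infty[0,T]$ one has $\langle v,\mathcal Z_h(x,\cdot)\rangle=\langle vh,x\rangle$ for s-a.e. $x$. (Here one either works under the standing assumption $h_1,h_2\in\mathrm{Supp}_\infty[0,T]$ or approximates; I would simply invoke the hypothesis as stated in the paper's Section \ref{sec:pre}.) Because $z\in BV[0,T]$, the path $w_{zh_1}(t)=\int_0^t z(s)h_1(s)\,ds$ lies in $C_0'[0,T]$, with derivative $zh_1\in L_2[0,T]$, so $w_{zh_1}$ is exactly the ``$w$'' one feeds into the classical theorem with ``$z$'' replaced by $zh_1$.

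First I would set $G(x):=F(\mathcal Z_{h_1}(x,\cdot))$, which by hypothesis is Wiener integrable (indeed scale-invariantly, after replacing $x$ by $\rho x$). The first variation of $G$ unwinds by the chain rule: since $\mathcal Z_{h_1}$ is linear in its first argument, $\mathcal Z_{h_1}(x+\mu y,\cdot)=\mathcal Z_{h_1}(x,\cdot)+\mu\mathcal Z_{h_1}(y,\cdot)$, so
\[
\delta G(x\mid y)=\frac{\partial}{\partial\mu}F\big(\mathcal Z_{h_1}(x,\cdot)+\mu\mathcal Z_{h_1}(y,\cdot)\big)\Big|_{\mu=0}
=\delta F\big(\mathcal Z_{h_1}(x,\cdot)\,\big|\,\mathcal Z_{h_1}(y,\cdot)\big).
\]
To match the left-hand side of the asserted identity, which involves the direction $\mathcal Z_{h_2}(w_{zh_1},\cdot)$ rather than $\mathcal Z_{h_1}(w,\cdot)$, I would choose the Cameron--Martin direction $w\in C_0'[0,T]$ so that $\mathcal Z_{h_1}(w,\cdot)=\mathcal Z_{h_2}(w_{zh_1},\cdot)$ as elements of $C_0[0,T]$; equivalently $h_1\chi_{[0,t]}$ integrated against $w$ equals $h_2\chi_{[0,t]}$ integrated against $w_{zh_1}$ for every $t$, which (taking derivatives in $t$ and using that $w,w_{zh_1}$ are absolutely continuous) forces $h_1(t)w'(t)=h_2(t)z(t)h_1(t)$, i.e. $w'(t)=h_2(t)z(t)$. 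Thus $w(t):=\int_0^t h_2(s)z(s)\,ds$, and I must check $h_2 z\in L_2[0,T]$; since $z\in BV[0,T]\subset L_\infty[0,T]$ and $h_2\in\mathrm{Supp}_2[0,T]\subset L_2[0,T]$, this holds. With this $w$, condition \eqref{step1-condition} becomes precisely the integrability hypothesis needed to apply \eqref{eq:CS-origin} to $G$ with the function ``$z$'' there equal to $w'=h_2 z$ — note $\delta G(\rho x+\rho\zeta w\mid\rho w)=\delta F(\rho\mathcal Z_{h_1}(x,\cdot)+\rho\zeta\mathcal Z_{h_2}(w_{zh_1},\cdot)\mid\rho\mathcal Z_{h_2}(w_{zh_1},\cdot))$, and I would absorb the sup over $|\zeta|\le\eta(\rho)$ either from an ambient hypothesis or, in the bounded-variation setting the paper works in, by a routine dominated-convergence argument.

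Next, I would apply the classical Cameron--Storvick theorem \eqref{eq:CS-origin} to $G$ with $z\rightsquigarrow h_2 z$, giving
\[
\int_{C_0[0,T]}^{\mathrm{anf}_q}\delta G(x\mid w)\,\mathfrak m(dx)
=-iq\int_{C_0[0,T]}^{\mathrm{anf}_q}\langle h_2 z,x\rangle\,G(x)\,\mathfrak m(dx),
\]
with ``either side exists $\Rightarrow$ both do''. The left side is $\int^{\mathrm{anf}_q}\delta F(\mathcal Z_{h_1}(x,\cdot)\mid\mathcal Z_{h_2}(w_{zh_1},\cdot))\,\mathfrak m(dx)$ by the computation above. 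For the right side I apply \eqref{eq:basic-rel} once more, now with $v=z$ and $h=h_2$: $\langle h_2 z,x\rangle=\langle z,\mathcal Z_{h_2}(x,\cdot)\rangle$ for s-a.e.\ $x$, and $G(x)=F(\mathcal Z_{h_1}(x,\cdot))$ by definition, yielding exactly the right-hand side of the claimed identity. Finally I would note that the ``$\mathrm{anf}_q$'' on both sides is genuinely the \emph{generalized} analytic Feynman integral of the original integrands, because for any functional $\Phi$ the substitution $x\mapsto\lambda^{-1/2}x$ inside $\Phi(\mathcal Z_{h_1}(x,\cdot))$ is the same as evaluating at $\lambda^{-1/2}\mathcal Z_{h_1}(x,\cdot)$ by linearity, so $\int^{\mathrm{anw}_\lambda}_{}\Phi(\mathcal Z_{h_1}(x,\cdot))\,\mathfrak m(dx)$ in Definition \ref{def:Faynman} coincides with the ordinary analytic Wiener integral $\int^{\mathrm{anw}_\lambda}_{}G(x)\,\mathfrak m(dx)$, and likewise in the $\lambda\to-iq$ limit.

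The main obstacle I anticipate is not any single computation but the bookkeeping around \eqref{eq:basic-rel} and the existence/interchangeability clauses: one must make sure the scale-invariant null sets on which \eqref{eq:basic-rel} fails do not interact badly with the $\rho$-scaling in condition \eqref{step1-condition} and in the definition of the analytic Feynman integral, and one must verify the dominating-function hypothesis of the classical theorem (the Wiener integrability of $\sup_{|\zeta|\le\eta(\rho)}|\delta G(\rho x+\rho\zeta w\mid\rho w)|$) is implied by \eqref{step1-condition} — in the $z\in BV[0,T]$ setting this is where the paper \cite{CC18-CPAA} presumably does its real work, e.g.\ via a mean-value estimate on $\delta F$ along the one-dimensional slice. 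I would flag that step as the technical heart and otherwise treat the argument as a transport-of-structure reduction to \eqref{eq:CS-origin}.
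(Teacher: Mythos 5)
The paper itself gives no proof of Theorem \ref{byparts-step1}; it is imported verbatim from \cite{CC18-CPAA}, where (as Section \ref{subsec:A} hints) the argument is run directly from the Cameron--Martin translation theorem: one writes the difference quotient defining $\delta F$, uses $\mathcal Z_{h_1}(x,\cdot)+\mu\mathcal Z_{h_2}(w_{zh_1},\cdot)=\mathcal Z_{h_1}(x+\mu w_0,\cdot)$ with $w_0(t)=\int_0^t h_2(s)z(s)\,ds$, translates, differentiates the Radon--Nikodym factor at $\mu=0$, and analytically continues. Your transport-of-structure reduction to the classical identity \eqref{eq:CS-origin} is a genuinely different (and cleaner) packaging of the same mechanism, and its computational content checks out: the choice $w=w_0$ does satisfy $\mathcal Z_{h_1}(w_0,\cdot)=\mathcal Z_{h_2}(w_{zh_1},\cdot)$ since both equal $t\mapsto\int_0^t h_1h_2z\,ds$; the chain-rule identity $\delta G(x|w_0)=\delta F(\mathcal Z_{h_1}(x,\cdot)|\mathcal Z_{h_1}(w_0,\cdot))$ is correct by linearity of the PWZ integral; $h_2z\in L_2[0,T]$ because $z\in BV\subset L_\infty$; and the identification of the generalized analytic Feynman integral of $\Phi(\mathcal Z_{h_1}(x,\cdot))$ with the ordinary one of $G=\Phi\circ\mathcal Z_{h_1}$ via $\mathcal Z_{h_1}(\rho x,\cdot)=\rho\,\mathcal Z_{h_1}(x,\cdot)$ is exactly the right observation and is needed for the reduction to make sense.

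The one genuine gap is the hypothesis mismatch you yourself flag but do not close. Theorem \ref{byparts-step1} assumes only the $L_1$ condition \eqref{step1-condition}, whereas the classical theorem you invoke as a black box requires Wiener integrability of $\sup_{|h|\le\eta(\rho)}|\delta F(\rho x+\rho hw|\rho w)|$; the latter is what licenses passing the $\mu\to0$ limit through the Wiener integral, and it does not follow from \eqref{step1-condition} by ``routine dominated convergence'' --- an integrable limit does not supply an integrable dominating function for the difference quotients. So as written your argument proves the variant recorded in the remark following Theorem \ref{byparts-step1} (the sup-hypothesis version), not the theorem as stated; to get the stated version one must either re-prove the classical theorem under the weaker $L_1$ hypothesis or run the translation-theorem computation directly as in \cite{CC18-CPAA}. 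You correctly identify this as the technical heart, but it is the one step of the proof that is actually missing rather than merely bookkeeping.
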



\begin{remark}
In \cite{CC18-CPAA}, Chang and Choi require $z$ to be ``of bounded variation'', but 
this requirement also can be replaced by the requirement that $z$ be ``of class $L_2[0,T]$'' since all 
of our Stieltjes integrals are interpreted  as Paley--Wiener--Zygmund integral.
Also, the condition \eqref{step1-condition} above can be replaced with the condition:
for some $\eta>0$,
\[
\sup_{|h|\le \eta(\rho)} |\delta  F(\rho \mathcal Z_{h_1} (x,\cdot) 
+\rho h \mathcal Z_{h_2}(w_{z h_1},\cdot)| \rho \mathcal Z_{h_2}(w_{z h_1},\cdot))| 
\]
is Wiener integrable  as a function of $x$.
Thus, setting $h_1\equiv 1$ and  $h_2\equiv 1$ yields the formula \eqref{eq:CS-origin}.
\end{remark}

\setcounter{equation}{0}
\section{Cylinder functionals}

\par
Functionals that involve PWZ stochastic integrals are quite common. 
A functional $F$ on $C_{0}[0,T]$ is called a cylinder functional if there 
exists a linearly independent  set $\mathcal{V}=\{v_1,\ldots, v_m\}$ 
of nonzero functions in $L_2[0,T]$ such that
\begin{equation}\label{eq:cylinder1}
F(x)
=\psi(\langle{v_1,x}\rangle,\ldots,\langle{v_m,x}\rangle),
\quad x \in C_{0}[0,T], 
\end{equation}
where $\psi$ is a complex-valued Lebesgue measurable function on $\mathbb R^m$.

\par
It is easy to show  that for the functional $F$ of the 
form \eqref{eq:cylinder1}, there exists an orthogonal  set  
$\mathcal{A}=\{\alpha_1,\ldots,\alpha_n\}$  of nonzero functions in $L_2[0,T]$ such that 
$F$ is expressed as 
\begin{equation}\label{eq:cylinder-x}
F(x)
=f(\langle{\alpha_1,x}\rangle,\ldots,\langle{\alpha_n,x}\rangle),
\quad x \in C_{0}[0,T], 
\end{equation}
where $f$ is a complex-valued Lebesgue measurable function on $\mathbb R^n$.  
Thus, there is no loss of generality in assuming that every cylinder  functional on 
$C_{0}[0,T]$ is of the form  \eqref{eq:cylinder-x}.

\par 
For $h\in \mathrm{Supp}_{\infty}[0,T]$, let  $\mathcal{Z}_h$ be 
the Gaussian process given by \eqref{eq:g-process} above and let $F$ be 
given by equation \eqref{eq:cylinder-x}. Then  by equation \eqref{eq:basic-rel},
\[
\begin{aligned}
F(\mathcal{Z}_h (x,\cdot))
&=f(\langle{\alpha_1,\mathcal{Z}_h (x,\cdot)}\rangle,
    \ldots ,\langle{\alpha_n,\mathcal{Z}_h (x,\cdot)}\rangle)\\
&=f(\langle{\alpha_1h,x}\rangle, \ldots ,\langle{\alpha_n h,x}\rangle).
\end{aligned}
\]


\begin{remark}
Even though the  set   $\mathcal{A}=\{\alpha_1,\ldots, \alpha_n\}$ 
of nonzero functions in $L_2[0,T]$ is   orthogonal,  the subset 
$\mathcal{A}h\equiv\{\alpha h: \alpha\in \mathcal{A}\}$
of $L_2[0,T]$ need not  be  orthogonal. 
Given an orthogonal  set $\mathcal{A}=\{\alpha_1,\ldots,\alpha_n\}$   of nonzero 
functions in  $L_2[0,T]$, let $\mathcal{O}_{\mathrm{Supp}_{\infty}}(\mathcal{A})$ 
be the class of all functions  $h\in \mathrm{Supp}_{\infty}[0,T]$ such that $\mathcal{A}h$ 
is orthogonal in $L_2[0,T]$. Since $\dim L_2[0,T]=\infty$,  infinitely 
many functions  $h$  exist in $\mathcal{O}_{\mathrm{Supp}_{\infty}}(\mathcal{A})$.  
\end{remark}


\begin{example}
For any $h\in\mathbb R\setminus\{0\}$, 
$h\in\mathcal{O}_{\mathrm{Supp}_{\infty}}(\mathcal{A})$, 
as a constant function on $[0,T]$.
\end{example}
 
\begin{example}
For any orthogonal set $\mathcal{A}=\{\alpha_1,\ldots, \alpha_n\}$ of nonzero
functions in $L_2[0,T]$,
let $L(S)$ be the subspace of $L_2[0,T]$ which is spanned by 
\[
S=\left\{\alpha_i\alpha_j : 1\leq i<j\leq n \right\},
\] 
and let $L(S)^{\perp}$ be the orthogonal 
complement of $L(S)$. Let 
\[
\mathcal{P}_{\mathrm{Supp}_{\infty}}(\mathcal{A})
=\{ h\in \mathrm{Supp}_{\infty}[0,T] : h^2 \in L(S)^{\perp}\}.
\]
Since $\dim L(S)$ is finite, and $\mathrm{Supp}_{\infty}[0,T]$ is dense in $L_2[0,T]$
($\mathrm{Supp}_{\infty}[0,T]$ contains all polynomials on $[0,T]$), 
$\dim(L(S)^{\perp}\cap \mathrm{Supp}_{\infty}[0,T])=\infty$  
and so $\mathcal{P}_{\mathrm{Supp}_{\infty}}(\mathcal{A})$ has infinitely many elements.

\par 
Let $h$ be an element of $\mathcal{P}_{\mathrm{Supp}_{\infty}}(\mathcal{A})$.
It is easy to show that  $\|\alpha_j h\|_{2}>0$ for all  $j\in\{1,\ldots,n\}$.
From the definition of the $\mathcal{P}_{\mathrm{Supp}_{\infty}}(\mathcal{A})$, we see that  
for $i,j\in\{1,\ldots,n\}$ with $i\ne j$,
\[
(\alpha_i h, \alpha_j h)_{2}
=\int_0^T \alpha_i(t) \alpha_j(t) h^2(t) dt=0.
\]
From these, we see that  $\mathcal{A} h$ is an orthogonal set of functions in $L_2[0,T]$
for any $h$   in $\mathcal{P}_{\mathrm{Supp}_{\infty}}(\mathcal{A})$, i.e.,
$\mathcal{P}_{\mathrm{Supp}_{\infty}}(\mathcal{A}) \subset \mathcal{O}_{\mathrm{Supp}_{\infty}}(\mathcal{A})$. 
\end{example}

The following lemma is very useful in order to establish our parts formulas in this paper.

\begin{lemma}[Wiener Integration Theorem]\label{lemma:ch-formula}
Let $\mathcal G=\{g_1, \ldots, g_{n}\}$ be an orthogonal set of nonzero 
functions in $\mathrm{Supp}_2[0,T]$.
Let $f: \mathbb R^{n} \to \mathbb C$ be a  Lebesgue measurable function.  Then
for any $\rho>0$,
\begin{equation}\label{eq:ch-formula}
\begin{aligned}
&\int_{C_0[0,T]}f (\rho \langle{g_1,x}\rangle,\ldots,\rho \langle{g_n,x}\rangle) \mathfrak{m}(dx)  \\
&\stackrel{*}{=}
\bigg(\prod_{j=1}^{n}2\pi\rho^2 \|g_j \|_2^2 \bigg)^{-1/2}\int_{\mathbb R^{n}}
f(u_1,\ldots, u_n)\exp
\bigg\{-\sum_{j=1}^{n}\frac{u_j^2}{2\rho^2 \|g_j\|_2^2}\bigg\}du_1\cdots d u_n,
\end{aligned}
\end{equation}
where  by $\stackrel{*}{=}$ we mean  that if either side  exists, 
both sides exist and equality holds.
\end{lemma}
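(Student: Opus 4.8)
The plan is to realize the left-hand side of \eqref{eq:ch-formula} as an integral against the distribution (image measure) of the $\mathbb R^{n}$-valued random variable
\[
\Xi_{\rho}(x)=\big(\rho\langle g_1,x\rangle,\ldots,\rho\langle g_n,x\rangle\big),
\qquad x\in C_0[0,T],
\]
on $(C_0[0,T],\mathcal M,\mathfrak m)$, and then to identify that distribution explicitly. Since $\mathcal G=\{g_1,\ldots,g_n\}$ is an orthogonal set of nonzero functions in $\mathrm{Supp}_2[0,T]$, each $\|g_j\|_2>0$, and — by the properties of the PWZ stochastic integral recalled in Section \ref{sec:pre} — every $\langle g_j,\cdot\rangle$ is a mean-zero Gaussian random variable with variance $\|g_j\|_2^{2}$, while orthogonality of $\mathcal G$ makes $\{\langle g_1,\cdot\rangle,\ldots,\langle g_n,\cdot\rangle\}$ an independent family. (In particular $\Xi_\rho$ is defined for s-a.e.\ $x$, which is all we need since the integral is over $C_0[0,T]$ up to a scale-invariant null set.) Consequently the distribution of $\Xi_{\rho}$ is the product Gaussian measure $\mu_{\rho}$ on $\mathbb R^{n}$ whose $j$-th factor is centered with variance $\rho^{2}\|g_j\|_2^{2}$; it is absolutely continuous with respect to $n$-dimensional Lebesgue measure $m_L^{(n)}$, with density
\[
\frac{d\mu_{\rho}}{dm_L^{(n)}}(u_1,\ldots,u_n)
=\bigg(\prod_{j=1}^{n}2\pi\rho^{2}\|g_j\|_2^{2}\bigg)^{-1/2}
\exp\bigg\{-\sum_{j=1}^{n}\frac{u_j^{2}}{2\rho^{2}\|g_j\|_2^{2}}\bigg\}.
\]

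Granting this, \eqref{eq:ch-formula} is the abstract change-of-variables identity $\int_{C_0[0,T]}(f\circ\Xi_{\rho})\,d\mathfrak m=\int_{\mathbb R^{n}}f\,d\mu_{\rho}$ combined with the density above (the latter integral may also be evaluated by iterating one-dimensional Gaussian integrals and invoking Fubini--Tonelli). I would carry this out in three steps. First, reduce to a Borel representative: since $\mu_{\rho}\ll m_L^{(n)}$, a Lebesgue-measurable $f$ differs from some Borel $\widetilde f$ only on an $m_L^{(n)}$-null set $N$, hence $\mu_{\rho}(N)=0$ and $\mathfrak m\big(\Xi_{\rho}^{-1}(N)\big)=0$; completeness of $(C_0[0,T],\mathcal M,\mathfrak m)$ then shows $f\circ\Xi_{\rho}$ is Wiener measurable and equals $\widetilde f\circ\Xi_{\rho}$ s-a.e., so neither side of \eqref{eq:ch-formula} changes upon replacing $f$ by $\widetilde f$. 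Second, prove the identity for $f\ge0$: both sides are then well defined in $[0,\infty]$ and the pushforward change-of-variables formula gives equality with no integrability hypothesis, showing in particular that $f\circ\Xi_{\rho}\in L_1(\mathfrak m)$ if and only if $f\in L_1(\mu_{\rho})$. Third, pass to general $\mathbb C$-valued $f$ by writing $f=(\operatorname{Re}f)^{+}-(\operatorname{Re}f)^{-}+i\big((\operatorname{Im}f)^{+}-(\operatorname{Im}f)^{-}\big)$, applying the nonnegative case to $|f|$ so that the two notions of ``existence'' coincide, and then using linearity of the integral on each side; this is exactly what the symbol $\stackrel{*}{=}$ asserts.

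The density computation for a product of centered Gaussians and the bookkeeping in the last step are routine. The one point requiring genuine care is the measurability reduction in the first step: the composition of a merely Lebesgue-measurable $f$ with $\Xi_{\rho}$ need not be measurable a priori, and it is precisely the nondegeneracy built into the hypothesis $g_j\in\mathrm{Supp}_2[0,T]$ (so that $\|g_j\|_2>0$ and $\mu_{\rho}$ has a density) together with the completeness of the Wiener measure space that makes it work. Everything else is a direct application of the facts about PWZ integrals already recorded in Section \ref{sec:pre}.
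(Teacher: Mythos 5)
Your proof is correct and complete. The paper itself states this lemma without proof, treating it as the standard Wiener integration formula for cylinder functionals; it rests exactly on the facts recorded in Section \ref{sec:pre}, namely that each $\langle g_j,\cdot\rangle$ is a mean-zero Gaussian random variable with variance $\|g_j\|_2^2$ and that orthogonality of $\{g_1,\ldots,g_n\}$ makes these random variables independent. Your pushforward argument — identifying the image measure of $\big(\rho\langle g_1,x\rangle,\ldots,\rho\langle g_n,x\rangle\big)$ as the product Gaussian with the stated density, reducing a merely Lebesgue-measurable $f$ to a Borel representative via absolute continuity of that image measure together with completeness of $(C_0[0,T],\mathcal M,\mathfrak m)$, and establishing the two-sided existence claim encoded in $\stackrel{*}{=}$ by first treating $f\ge 0$ and then decomposing — is the standard justification and handles the only genuinely delicate points (measurability of the composition and the ``if either side exists'' clause) correctly.
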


\par 
Let $n$ be a positive integer (fixed throughout this paper) and 
let $\mathcal A=\{\alpha_1, \ldots, \alpha_n\}$ be an orthogonal set of nonzero functions from 
$(L_2[0,T], \|\cdot\|_2)$. Let $m$ be a nonnegative integer. 
Then for $1 \le p <  +\infty$, let $\mathcal B_{\mathcal A}(p;m)$ be the space of all functionals of the
form  \eqref{eq:cylinder-x}
for s-a.e. $x \in C_0[0,T]$ where all of the $k$th-order partial derivatives
\[
f_{j_1,\ldots,j_k}(u_1,\ldots, u_n)  =  f_{j_1,\ldots,j_k} (\vec u)
\]
 of $f : \mathbb R^n \to \mathbb   R$
are continuous and in $L_p(\mathbb R^n)$ for $k\in \{0,1,\ldots,m\}$ and each $j_i \in \{1,\ldots, n\}$.
Also, let $\mathcal B_{\mathcal A}(\infty;m)$ be the space of all functionals of the form \eqref{eq:cylinder-x} 
for s-a.e. $x \in C_0[0,T]$  where for $k \in \{ 0, 1,\ldots,m\}$,
all of the $k$th-order partial derivatives $ f_{j_1,\ldots,j_k} (\vec u)$ of $f$
are in $C_0(\mathbb R^n)$, the space of bounded continuous functions on $\mathbb R^n$  
that vanish at infinity.

\setcounter{equation}{0}
\section{Integration by parts formulas for the  generalized analytic Feynman integral}\label{sec:parts-f}

In this section we establish integration by parts formulas for
the generalized analytic Feynman integral. 
We start this section with the existence of the generalized analytic Feynman integral associated with
Gaussian paths $\mathcal Z_h$ of functionals $F$ in $\mathcal B_{\mathcal A}(p;m)$.


\begin{theorem}\label{them:1-st}
Let $\mathcal A=\{\alpha_1,\ldots,\alpha_n\}$ be an orthogonal set of nonzero functions in $L_2[0,T]$,
let $p\in[1,+\infty]$ be given, let $m$ be a non-negative integer, and 
let $F\in \mathcal B_{\mathcal A}(p;m)$ be given by equation \eqref{eq:cylinder-x}.
Then for any $h\in \mathcal{O}_{\mathrm{Supp}_{\infty}}(\mathcal{A})$ and all $q\in \mathbb R\setminus\{0\}$,
the generalized analytic Feynman integral associated
with the Gaussian paths   $\mathcal{Z}_h(x,\cdot)$ of $F$ with parameter $q$
exists and is given by the formula
\begin{equation}\label{eq:Z-Feynman-int}
\begin{aligned}
&\int_{C_0[0,T]}^{\mathrm{anf}_q}F(\mathcal Z_h(x,\cdot))\mathfrak{m}(dx)\\
&=\bigg(\prod_{j=1}^n\frac{-iq}{2\pi\|\alpha_jh\|_2^2}\bigg)^{1/2}
\int_{\mathbb R^n}f(u_1,\ldots,u_n)\exp\bigg\{\frac{iq}{2}\sum_{j=1}^n \frac{u_j^2}{\|\alpha_jh\|_2^2}\bigg\}du_1\cdots du_n.
\end{aligned}
\end{equation}
\end{theorem}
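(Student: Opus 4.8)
The plan is to follow the three stages built into Definition~\ref{def:Faynman}: evaluate the Wiener integral of $F(\lambda^{-1/2}\mathcal{Z}_h(x,\cdot))$ for real $\lambda>0$, exhibit its analytic continuation to $\mathbb{C}_+$, and then pass to the boundary limit $\lambda\to -iq$. First, since $h\in\mathcal{O}_{\mathrm{Supp}_{\infty}}(\mathcal{A})$ the set $\mathcal{A}h=\{\alpha_1 h,\ldots,\alpha_n h\}$ is orthogonal in $L_2[0,T]$ and every $\alpha_j h$ has positive $L_2$-norm, so by \eqref{eq:basic-rel} the composition $F(\lambda^{-1/2}\mathcal{Z}_h(x,\cdot))$ reduces to the ordinary cylinder functional $f(\lambda^{-1/2}\langle{\alpha_1 h,x}\rangle,\ldots,\lambda^{-1/2}\langle{\alpha_n h,x}\rangle)$ for s-a.e.\ $x$. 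Applying Lemma~\ref{lemma:ch-formula} with $\mathcal{G}=\mathcal{A}h$ and $\rho=\lambda^{-1/2}$ (the lemma uses only orthogonality and positivity of the norms, both of which $\mathcal{A}h$ enjoys) gives, for every $\lambda>0$,
\[
\int_{C_0[0,T]}F(\lambda^{-1/2}\mathcal{Z}_h(x,\cdot))\mathfrak{m}(dx)
=\bigg(\prod_{j=1}^n\frac{\lambda}{2\pi\|\alpha_j h\|_2^2}\bigg)^{1/2}
\int_{\mathbb R^n}f(u_1,\ldots,u_n)\exp\bigg\{-\frac{\lambda}{2}\sum_{j=1}^n\frac{u_j^2}{\|\alpha_j h\|_2^2}\bigg\}du_1\cdots du_n ;
\]
finiteness of the right-hand integral is where $F\in\mathcal B_{\mathcal A}(p;m)$ enters, via boundedness of the Gaussian factor when $p=1$, its integrability when $p=\infty$, and H\"older's inequality with the Gaussian in $L_{p'}(\mathbb R^n)$ when $1<p<\infty$.

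Next, I would let $J^*(h;\lambda)$ denote the right-hand side of the display above, now with $\lambda$ ranging over $\mathbb{C}_+$. For $\mathrm{Re}\,\lambda>0$ the modulus of the exponential is again a genuine Gaussian, so the estimate just used shows the integral converges absolutely and $J^*(h;\cdot)$ is well defined on $\mathbb{C}_+$; on each compact subset of $\mathbb{C}_+$ one has $\mathrm{Re}\,\lambda\ge\delta>0$, which furnishes a $\lambda$-independent integrable majorant for the integrand, so by Morera's theorem together with Fubini's theorem $J^*(h;\cdot)$ is analytic on $\mathbb{C}_+$. Since $J^*(h;\lambda)$ agrees for $\lambda>0$ with the Wiener integral computed above, Definition~\ref{def:Faynman} identifies it with the generalized analytic Wiener integral $\int_{C_0[0,T]}^{\mathrm{anw}_\lambda}F(\mathcal Z_h(x,\cdot))\mathfrak{m}(dx)$ for every $\lambda\in\mathbb{C}_+$.

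Finally, it remains to let $\lambda\to -iq$ within $\mathbb{C}_+$ and identify the limit with the right-hand side of \eqref{eq:Z-Feynman-int}, which is obtained from the displayed formula by formally setting $\lambda=-iq$. When $f\in L_1(\mathbb R^n)$ — in particular when $p=1$ — the integrand of $J^*(h;\lambda)$ is dominated by $|f|$ uniformly for $\mathrm{Re}\,\lambda\ge 0$, so the dominated convergence theorem yields the limit immediately and the integral on the right of \eqref{eq:Z-Feynman-int} converges absolutely. In the remaining cases the exponential has modulus one on the imaginary axis, so the $\mathbb R^n$-integral in \eqref{eq:Z-Feynman-int} must be read as the boundary value of $J^*(h;\cdot)$ (a conditionally convergent, Fresnel-type integral); to establish its existence and value I would approximate $f$ in the $L_p$-norm (for $p=\infty$, in sup-norm, using the derivative hypotheses to integrate by parts in the oscillatory integral) by Schwartz functions, compute and pass to the limit for the approximants through the explicit Gaussian Fourier transform, and control the error uniformly in $\lambda$ by means of the Hausdorff--Young inequality and a uniform bound on the Fresnel-kernel operator — equivalently, one invokes the already developed $L_p$-theory of the analytic Feynman integral of cylinder functionals. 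I expect this last step to be the main obstacle: once the Gaussian degenerates into a unimodular phase the naive dominated-convergence argument fails, and justifying that the boundary limit of the oscillatory integral exists and has the stated value requires a genuine estimate rather than a routine computation.
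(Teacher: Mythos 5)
Your proposal follows exactly the route of the paper's own proof: reduce $F(\lambda^{-1/2}\mathcal Z_h(x,\cdot))$ via \eqref{eq:basic-rel} to a cylinder functional in the orthogonal set $\mathcal A h$, evaluate the Wiener integral for $\lambda>0$ by Lemma~\ref{lemma:ch-formula} with $\mathcal G=\mathcal A h$, establish analyticity on $\mathbb C_+$ by the Fubini--Morera argument (your compact-subset majorant with $\operatorname{Re}\lambda\ge\delta$ is a slightly more careful version of the paper's combination of $|H_{\mathcal A h}(\lambda;\vec u)|\le 1$ with H\"older's inequality), and then pass to the boundary $\lambda\to-iq$. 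The one place you diverge is the last step, and it is worth being precise about what happens there. The paper disposes of it with the single sentence that equation \eqref{eq:Z-Feynman-int} follows ``by the dominated convergence,'' i.e.\ it implicitly dominates $f(\vec u)H_{\mathcal A h}(\lambda;\vec u)$ by $|f(\vec u)|$ using $|H_{\mathcal A h}|\le 1$ on $\widetilde{\mathbb C}_+$; as you correctly observe, this is only a proof when $f\in L_1(\mathbb R^n)$, and for $f\in L_p(\mathbb R^n)$ with $p>1$ (or $f\in C_0(\mathbb R^n)$ when $p=\infty$) the right-hand side of \eqref{eq:Z-Feynman-int} is not even absolutely convergent, so the obstacle you flag is genuine and is not actually resolved by the paper either. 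Relative to the published argument you have therefore missed nothing --- if anything you are more honest about where the difficulty sits --- but as a self-contained proof your treatment of the boundary limit for $p>1$ remains a plan (Schwartz approximation, Hausdorff--Young, a uniform Fresnel-kernel bound) rather than an argument, and that is the one piece you would still have to write out; the alternative is to restrict to $p=1$, which is the only case the paper's closing appeal to dominated convergence literally covers.
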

\begin{proof}
Using   \eqref{eq:cylinder-x}, \eqref{eq:basic-rel}, and   \eqref{eq:ch-formula}  with $\mathcal G$ replaced  
with $\mathcal Ah$, it follows that for all $\lambda>0$,
\[
\begin{aligned}
&J_{F}(h;\lambda)\\
&=\int_{C_0[0,T]}f (\lambda^{-1/2} \langle{\alpha_1, \mathcal{Z}_h(x,\cdot)}\rangle,
   \ldots,
  \lambda^{-1/2} \langle{\alpha_n, \mathcal{Z}_h(x,\cdot)}\rangle )\mathfrak{m}(dx)\\
&=\int_{C_0[0,T]}f (\lambda^{-1/2} \langle{\alpha_1h,x}\rangle,
   \ldots,
    +\lambda^{-1/2} \langle{\alpha_nh, x}\rangle )\mathfrak{m}(dx)\\
&=\bigg(\prod_{j=1}^n\frac{\lambda}{2\pi\|\alpha_jh\|_2^2}\bigg)^{1/2}
\int_{\mathbb R^n}f (u_1,\ldots,u_n)
\exp\bigg\{-\frac{\lambda}{2}\sum_{j=1}^{n}\frac{u_j^2}{\|\alpha_j h\|_2^2}\bigg\}du_1\cdots du_n.
\end{aligned}
\]
For  $(\lambda,\vec u) \in \mathbb C_+\times \mathbb R^n$, let   
\[
H_{\mathcal A h}(\lambda;\vec u)
=\exp\bigg\{-\frac{\lambda}{2}\sum_{j=1}^{n}\frac{u_j^2}{\|\alpha_j h\|_2^2}\bigg\} 
\]
and for $\lambda\in \mathbb C_+$, let  
\begin{equation}\label{analytic-lambda}
J_{F}^*(h;\lambda)
=\bigg(\prod_{j=1}^n\frac{\lambda}{2\pi\|\alpha_jh\|_2^2}\bigg)^{1/2}
\int_{\mathbb R^n}f (u_1,\ldots,u_n)H_{\mathcal A h}(\lambda;\vec u)du_1\cdots du_n.
\end{equation}
Then we see   that 
\begin{itemize}
  \item[(i)] for all $\lambda>0$, $J_{F}^*(h;\lambda)=J_{F}(h;\lambda)$, 
  \item[(ii)] for each $\lambda \in \mathbb C_+$,  
$H_{\mathcal A h}(\lambda;\vec u)$, as a function of $\vec u$, is an element of $L_p(\mathbb R^n)\cap C_0(\mathbb R^n)$
for all $p\in [1,+\infty]$, and 
  \item[(iii)]  $|H_{\mathcal A h}(\lambda;\vec u)|\le 1$ for all   $(\lambda,\vec u) \in   \widetilde{\mathbb C}_+\times \mathbb R^n$.
\end{itemize}
Thus   using H\"older's inequality, we  can see that 
$|f(\vec u)|H_{\mathcal A h}(\lambda;\vec u)$,
as a function of $\vec u$, is an element of $L_1(\mathbb R^n)$ 
whenever $f\in L_p (\mathbb R^n)$ for every $p\in[1,+\infty]$.
Hence, using the dominated convergence theorem, it follows that
$J_{F}^*(h;\lambda)$
is a continuous function of  $\lambda$ on $\mathbb C_+$.
Clearly, $H_{\mathcal A h}(\lambda;\vec u)$ is analytic on $\mathbb C_+$  
as a function of $\lambda$.
Hence, by the Fubini theorem and the Cauchy theorem, we obtain  that
\[
\int_{\Delta}J_{F}^*(h;\lambda)d\lambda
=\bigg(\prod_{j=1}^n\frac{\lambda}{2\pi\|\alpha_jh\|_2^2} \bigg)^{1/2} 
\int_{\mathbb R^n}f(\vec u )\int_{\Delta}
H_{\mathcal A h}(\lambda;\vec u)d\lambda d\vec u=0
\]
for any rectifiable simple closed curve $\Delta$ lying in $\mathbb C_+$.
Thus by the Morera theorem, 
$\int_{C_0[0,T]}^{\text{\rm anw}_{\lambda}}
F(\mathcal{Z}_h(x,\cdot))\mathfrak{m}(dx)
= J^*(h;\lambda)$
given by \eqref{analytic-lambda}  is an analytic  function of $\lambda$
throughout $\mathbb C_+$. 
Finally,  by the dominated convergence, it follows equation \eqref{eq:Z-Feynman-int}.
\qed\end{proof}

\par
The following observations are very useful to complete the proof of our main theorems 
(i.e., Theorems \ref{thm:by-parts-F}, \ref{thmby-parts-gfft1}, and \ref{thmby-parts-gfft2}  below).
\begin{itemize}
  \item[(1)]
If we choose $z\in L_2[0,T]$ and define 
\begin{equation}\label{eq:function-wz}
w_z(t)=\int_0^t z(s)ds 
\end{equation} 
for $t\in[0,T]$, then $w_z$ is an element of $C_0'[0,T]$, see equation \eqref{eq:CMspace}, $Dw_z=z$ $m_L$-a.e. on $[0,T]$,
where $Dw(s)=\frac{dw}{ds}(s)$,
and for all $v\in L_2[0,T]$, 
\begin{equation}\label{eq:basic-rel2} 
\langle{v,w_z}\rangle=(v,Dw_z)_2=(v,z)_2,
\end{equation}
where of course $(v,z)_2=\int_0^T v(s)z(s)ds$.
 \item[(2)]
Let $h_1$ and $h_2$ be functions in $\mathrm{Supp}_{\infty}[0,T]$.
Given $z\in L_2[0,T]$, 
let $w_{zh_1}\in C_0'[0,T]$ be given by \eqref{eq:function-w-phih} above.
In this case, using \eqref{eq:basic-rel} and \eqref{eq:basic-rel2},
we then also  see that 
\[
\langle{v,\mathcal Z_{h_2}(w_{zh_1},\cdot)}\rangle
=\langle{vh_2, w_{zh_1}}\rangle
=(vh_2,zh_1)_2. 
\] 
 \item[(3)]  Given an orthogonal set $\mathcal A=\{\alpha_1,\ldots,\alpha_n\}$   
of nonzero functions in $L_2[0,T]$,  $p\in[1,+\infty]$, and a nonnegative integer $m$,
let $F\in \mathcal B_{\mathcal A}(p;m)$.
Then, using \eqref{eq:cylinder-x} and  \eqref{eq:basic-rel},
we see that for any function $h\in \mathcal O_{\mathrm{Supp}_{\infty}}(\mathcal A)$,
 $F(\mathcal Z_h(x,\cdot))$ belongs to the space $\mathcal B_{\mathcal Ah}(p;m)$.
\end{itemize}

Our next lemma follows directly from the definitions of $\delta_{h_1,h_2} F $ and $\mathcal B_{\mathcal A}(p;m)$.


\begin{lemma}\label{lemma:1}
Let $\mathcal A=\{\alpha_1,\ldots,\alpha_n\}$ be an orthogonal set of nonzero 
functions in $L_2[0,T]$, let $p\in[1,+\infty]$ be given, let $m$ be a positive integer, 
let $F\in \mathcal B_{\mathcal A}(p;m)$ be given by equation \eqref{eq:cylinder-x},  
and let $w_z\in C_0'[0,T]$ be  given by \eqref{eq:function-wz}.
Then for any functions $h_1\in \mathcal{O}_{\mathrm{Supp}_{\infty}} (\mathcal{A})$
and $h_2\in  \mathrm{Supp}_{\infty}[0,T]$,
\begin{equation}\label{eq:ev-delta}
\begin{aligned}
 \delta_{h_1,h_2}F(x|w_z) 
&\equiv \delta F(\mathcal Z_{h_1}(x,\cdot)|\mathcal Z_{h_2}(w_z,\cdot))\\
&=\sum_{j=1}^n (\alpha_j h_2,z)_2 f_j(\langle{\alpha_1h_1,x}\rangle, 
\ldots ,\langle{\alpha_n h_1,x}\rangle) 
\end{aligned}
\end{equation}
for s-a.e. $x\in C_0[0,T]$.
Furthermore, $\delta_{h_1,h_2}F(\cdot|w_z) \in \mathcal B_{\mathcal A h_1}(p;m-1)$.
\end{lemma}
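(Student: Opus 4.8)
The plan is to verify the formula \eqref{eq:ev-delta} by a direct computation using the chain rule, and then to read off the membership statement from the structure of the resulting expression. First I would write, for $F$ given by \eqref{eq:cylinder-x} and $h_1\in\mathcal O_{\mathrm{Supp}_\infty}(\mathcal A)$, $h_2\in\mathrm{Supp}_\infty[0,T]$, the composition
\[
F(\mathcal Z_{h_1}(x,\cdot)+\mu\mathcal Z_{h_2}(w_z,\cdot))
=f\big(\langle\alpha_1,\mathcal Z_{h_1}(x,\cdot)+\mu\mathcal Z_{h_2}(w_z,\cdot)\rangle,\ldots,\langle\alpha_n,\mathcal Z_{h_1}(x,\cdot)+\mu\mathcal Z_{h_2}(w_z,\cdot)\rangle\big),
\]
and then apply \eqref{eq:basic-rel} together with linearity of the PWZ integral in its first argument to rewrite the $j$th slot as $\langle\alpha_j h_1,x\rangle+\mu\langle\alpha_j h_2,w_z\rangle$. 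By observation (1) above (specifically \eqref{eq:basic-rel2}), $\langle\alpha_j h_2,w_z\rangle=(\alpha_j h_2,z)_2$, so the $j$th slot equals $\langle\alpha_j h_1,x\rangle+\mu(\alpha_j h_2,z)_2$, a polynomial of degree one in $\mu$.

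Next I would differentiate with respect to $\mu$ at $\mu=0$. Since $F\in\mathcal B_{\mathcal A}(p;m)$ with $m\ge 1$, all first-order partials $f_j$ are continuous, which justifies the chain rule and gives
\[
\delta_{h_1,h_2}F(x|w_z)
=\frac{\partial}{\partial\mu}\,f\big(\langle\alpha_1 h_1,x\rangle+\mu(\alpha_1 h_2,z)_2,\ldots\big)\Big|_{\mu=0}
=\sum_{j=1}^n(\alpha_j h_2,z)_2\,f_j\big(\langle\alpha_1 h_1,x\rangle,\ldots,\langle\alpha_n h_1,x\rangle\big),
\]
which is exactly \eqref{eq:ev-delta}. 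The identity holds for s-a.e.\ $x$ because the PWZ integrals $\langle\alpha_j h_1,\cdot\rangle$ and the relations \eqref{eq:basic-rel} are only valid s-a.e. One small point to check is that $\mathcal A h_1$ is an orthogonal set of nonzero functions in $\mathrm{Supp}_2[0,T]$ (so that \eqref{eq:cylinder-x}-type representations are legitimate for the composed functional): this is precisely the content of $h_1\in\mathcal O_{\mathrm{Supp}_\infty}(\mathcal A)$ combined with $\|\alpha_j h_1\|_2>0$.

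For the final assertion, $\delta_{h_1,h_2}F(\cdot|w_z)\in\mathcal B_{\mathcal A h_1}(p;m-1)$, I would note that the right-hand side of \eqref{eq:ev-delta} is a cylinder functional of the form $g(\langle\alpha_1 h_1,x\rangle,\ldots,\langle\alpha_n h_1,x\rangle)$ with
\[
g(\vec u)=\sum_{j=1}^n(\alpha_j h_2,z)_2\,f_j(\vec u),
\]
a finite linear combination (with constant coefficients) of the first partials $f_j$. Since $F\in\mathcal B_{\mathcal A}(p;m)$, every partial derivative of $f$ up to order $m$ is continuous and lies in $L_p(\mathbb R^n)$ (respectively in $C_0(\mathbb R^n)$ when $p=\infty$); hence every partial derivative of $g$ up to order $m-1$, being a linear combination of partials of $f$ of order at most $m$, enjoys the same continuity and integrability. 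Because these classes are closed under finite linear combinations, $g$ satisfies the defining conditions of $\mathcal B_{\mathcal A h_1}(p;m-1)$, and the orthogonality of $\mathcal A h_1$ noted above makes this membership meaningful. I do not anticipate a serious obstacle here; the only points requiring care are the s-a.e.\ qualifications attached to the PWZ integral and the verification that $\mathcal A h_1$ is orthogonal so that the target space $\mathcal B_{\mathcal A h_1}(p;m-1)$ is well-defined — both of which are handled by the hypothesis $h_1\in\mathcal O_{\mathrm{Supp}_\infty}(\mathcal A)$.
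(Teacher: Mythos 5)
Your proposal is correct and is exactly the direct computation the paper has in mind: the paper gives no written proof, stating only that the lemma ``follows directly from the definitions of $\delta_{h_1,h_2}F$ and $\mathcal B_{\mathcal A}(p;m)$,'' and your chain-rule calculation via \eqref{eq:basic-rel} and \eqref{eq:basic-rel2}, together with the observation that $g(\vec u)=\sum_{j}(\alpha_j h_2,z)_2 f_j(\vec u)$ inherits continuity and $L_p$-integrability of its partials up to order $m-1$, is precisely that argument spelled out. Your added care about the s-a.e.\ qualifiers and the orthogonality of $\mathcal A h_1$ is appropriate and consistent with the paper's framework.
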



\begin{lemma}\label{lemma:B}
Let $\mathcal A$, $p$, $m$, $F$, and $w_z$ be as in Lemma \ref{lemma:1}. 
For $p' \in [1,+\infty]$ with $(1/p)+(1/p')=1$, let $G\in \mathcal B_{\mathcal A}(p';m)$
be given by 
\begin{equation}\label{eq:cylinder-G}
G(x)=g (\langle{\alpha_1,x}\rangle,\ldots,\langle{\alpha_n,x}\rangle).
\end{equation}
Define $R(x)=F(x)G(x)$ for $x\in C_0[0,T]$.
Then  
$R \in \mathcal B_{\mathcal A}(1;m)$, and for any functions 
$h_1\in \mathcal{O}_{\mathrm{Supp}_{\infty}} (\mathcal{A})$
and $h_2\in  \mathrm{Supp}_{\infty}[0,T]$,
$\delta_{h_1,h_2} R(\cdot|w_z) \in \mathcal B_{\mathcal Ah_1}(1;m-1)$, as a function of $x$.
\end{lemma}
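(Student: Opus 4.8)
The plan is to reduce everything to the explicit cylinder representation of $R$ and its variation, and then invoke Lemma \ref{lemma:1} applied to the product functional. First I would write out the product: since $F\in\mathcal B_{\mathcal A}(p;m)$ is of the form \eqref{eq:cylinder-x} with generating function $f$, and $G\in\mathcal B_{\mathcal A}(p';m)$ is of the form \eqref{eq:cylinder-G} with generating function $g$, both using the \emph{same} orthogonal set $\mathcal A=\{\alpha_1,\ldots,\alpha_n\}$, the product $R(x)=F(x)G(x)$ is the cylinder functional
\[
R(x)=r(\langle{\alpha_1,x}\rangle,\ldots,\langle{\alpha_n,x}\rangle),\qquad r(\vec u)=f(\vec u)g(\vec u).
\]
So $R$ is again of the form \eqref{eq:cylinder-x} with the same set $\mathcal A$, and it remains only to check that $r$ has the regularity required for membership in $\mathcal B_{\mathcal A}(1;m)$.

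The key step is the verification that all partial derivatives $r_{j_1,\ldots,j_k}$ of order $k\in\{0,1,\ldots,m\}$ are continuous and lie in $L_1(\mathbb R^n)$. Continuity is immediate from the Leibniz rule, since every $f_{j_1,\ldots,j_\ell}$ and $g_{j_1,\ldots,j_\ell}$ is continuous for $\ell\le m$ by hypothesis, and $r_{j_1,\ldots,j_k}$ is a finite sum of products of such derivatives of $f$ with complementary-order derivatives of $g$. For the $L_1$ bound, each such term is a product $f_{i_1,\ldots,i_a}\cdot g_{i_{a+1},\ldots,i_k}$ with $a\le m$ and $k-a\le m$; by Hölder's inequality, $\|f_{i_1,\ldots,i_a}\,g_{i_{a+1},\ldots,i_k}\|_1\le \|f_{i_1,\ldots,i_a}\|_p\,\|g_{i_{a+1},\ldots,i_k}\|_{p'}$, and both factors are finite because $F\in\mathcal B_{\mathcal A}(p;m)$ and $G\in\mathcal B_{\mathcal A}(p';m)$. (The endpoint cases $p=1,p'=\infty$ and $p=\infty,p'=1$ are handled the same way, using $C_0(\mathbb R^n)\subset L_\infty(\mathbb R^n)$.) Hence $R\in\mathcal B_{\mathcal A}(1;m)$.

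For the statement about the first variation, I would apply Lemma \ref{lemma:1} directly to $R$: since $R\in\mathcal B_{\mathcal A}(1;m)$ with $m\ge 1$, that lemma gives, for $h_1\in\mathcal O_{\mathrm{Supp}_\infty}(\mathcal A)$ and $h_2\in\mathrm{Supp}_\infty[0,T]$,
\[
\delta_{h_1,h_2}R(x|w_z)=\sum_{j=1}^n(\alpha_jh_2,z)_2\,r_j(\langle{\alpha_1h_1,x}\rangle,\ldots,\langle{\alpha_nh_1,x}\rangle)
\]
for s-a.e.\ $x$, and moreover $\delta_{h_1,h_2}R(\cdot|w_z)\in\mathcal B_{\mathcal Ah_1}(1;m-1)$, which is exactly the claim. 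Strictly, Lemma \ref{lemma:1} is phrased with $p\in[1,\infty]$, and here we use it with $p=1$; the hypotheses of that lemma (an orthogonal set, a generating function whose derivatives through order $m$ are continuous and in $L_1$) are met by the previous paragraph. Note that $\mathcal Ah_1$ is orthogonal precisely because $h_1\in\mathcal O_{\mathrm{Supp}_\infty}(\mathcal A)$, so the target space $\mathcal B_{\mathcal Ah_1}(1;m-1)$ is well-defined.

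The main obstacle, such as it is, is purely bookkeeping: organizing the Leibniz expansion of $r_{j_1,\ldots,j_k}$ so that each resulting term pairs a derivative of $f$ of order at most $m$ with a derivative of $g$ of order at most $m$, so that the Hölder estimate applies term by term. There is no analytic subtlety here — no limits, no measure-theoretic pathology — only the observation that the product structure of $R$ in the \emph{same} coordinates $\langle{\alpha_j,x}\rangle$ keeps us inside the cylinder-functional framework, and that $1/p+1/p'=1$ is exactly what makes the product of an $L_p$ derivative and an $L_{p'}$ derivative an $L_1$ function.
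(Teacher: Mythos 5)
Your proposal is correct and follows essentially the same route as the paper's proof: write $R$ as the cylinder functional with generating function $r=fg$, check membership in $\mathcal B_{\mathcal A}(1;m)$ via H\"older's inequality applied to the Leibniz expansion of the partial derivatives of $r$, and then deduce the statement about $\delta_{h_1,h_2}R(\cdot|w_z)$ from Lemma \ref{lemma:1}. The only difference is that you spell out the Leibniz/H\"older bookkeeping and the endpoint cases $p\in\{1,\infty\}$, which the paper leaves implicit.
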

\begin{proof}
Note that $R(x)=r(\langle{\alpha_1,x}\rangle,\ldots,\langle{\alpha_n,x}\rangle)$
where 
\[
r(u_1,\ldots,u_n)=f(u_1,\ldots,u_n)g (u_1,\ldots,u_n).
\]
We see that $R$ is an element of $\mathcal B_{\mathcal A}(1;m)$
since all the $k$-th order partial derivatives of $r$ are continuous 
and in $L_1(\mathbb R^n)$ for $k\in\{0,1,\ldots,m\}$ by H\"older's inequality.
The fact that $\delta R_{h_1,h_2}(x|w_z)$, as a function of $x$, is an element 
of $\mathcal B_{\mathcal Ah_1}(1;m-1)$
now follows from Lemma \ref{lemma:1}.
\qed\end{proof}

\begin{remark}
Given $z\in L_2[0,T]$ and $h_1\in  \mathcal{O}_{\mathrm{Supp}_{\infty}} (\mathcal{A})$,
let $w_{zh_1}\in C_0'[0,T]$ be given by \eqref{eq:function-w-phih}.
Then  equation \eqref{eq:ev-delta} with $w_z$ replaced with $w_{zh_1}$ can be rewritten as
\begin{equation}\label{eq:ev-delta-h12}
\begin{aligned}
\delta_{h_1,h_2}F(x|w_{zh_1}) 
&\equiv \delta F(\mathcal Z_{h_1}(x,\cdot)|\mathcal Z_{h_2}(w_{zh_1},\cdot))\\&
=\sum_{j=1}^n (\alpha_j h_2,zh_1)_2 
f_j(\langle{\alpha_1h_1,x}\rangle, \ldots ,\langle{\alpha_n h_1,x}\rangle) 
\end{aligned}
\end{equation}
for   s-a.e. $x\in C_0[0,T]$.
\end{remark}

\par
In our next theorem we obtain an integration by parts formula for the generalized analytic Feynman integral.


\begin{theorem}\label{thm:by-parts-F}
Let $\mathcal A$,  $p$, $m$, $F$, and $G$  be as in Lemma \ref{lemma:B}.
Given $z\in L_2[0,T]$ and $h_1\in  \mathcal{O}_{\mathrm{Supp}_{\infty}} (\mathcal{A})$,
let $w_{zh_1}\in C_0'[0,T]$  be given by \eqref{eq:function-w-phih}.
Then for any function $h_2$ in   $\mathrm{Supp}_{\infty}[0,T]$, 
and all $q\in \mathbb R \setminus\{0\}$, it follows that 
\begin{equation}\label{eq:by-parts-F}
\begin{aligned}
&\int_{C_0[0,T]}^{\mathrm{anf}_q}
\big[F(\mathcal Z_{h_1}(x,\cdot))\delta G(\mathcal Z_{h_1}(x,\cdot)|\mathcal Z_{h_2}(w_{zh_1},\cdot)) \\
& \qquad\qquad
+\delta F(\mathcal Z_{h_1}(x,\cdot)|\mathcal Z_{h_2}(w_{zh_1},\cdot))
G(\mathcal Z_{h_1}(x,\cdot))\big]\mathfrak{m}(dx)\\
&=-iq \int_{C_0[0,T]}^{\mathrm{anf}_q} \langle{z,\mathcal Z_{h_2}(x,\cdot)}\rangle
F(\mathcal Z_{h_1}(x,\cdot))G(\mathcal Z_{h_1}(x,\cdot)) \mathfrak{m}(dx).
\end{aligned}
\end{equation}
\end{theorem}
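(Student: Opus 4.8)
The plan is to deduce the formula from the general Cameron--Storvick theorem (Theorem~\ref{byparts-step1}) applied to the product functional $R=FG$, together with the Leibniz rule for the first variation. The starting observation is that, since $\delta_{h_1,h_2}(\cdot\,|\,w_{zh_1})$ is by definition the derivative in $\mu$ at $\mu=0$ of a translation, it satisfies
\[
\delta R(\mathcal Z_{h_1}(x,\cdot)|\mathcal Z_{h_2}(w_{zh_1},\cdot))
=F(\mathcal Z_{h_1}(x,\cdot))\,\delta G(\mathcal Z_{h_1}(x,\cdot)|\mathcal Z_{h_2}(w_{zh_1},\cdot))
+\delta F(\mathcal Z_{h_1}(x,\cdot)|\mathcal Z_{h_2}(w_{zh_1},\cdot))\,G(\mathcal Z_{h_1}(x,\cdot))
\]
for s-a.e.\ $x\in C_0[0,T]$, because Lemma~\ref{lemma:1} (applied to $F$, to $G$, and to $FG$, all of which lie in $\mathcal B$-classes with $m\ge1$) guarantees that each of the three variations exists s-a.e. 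Thus the integrand on the left-hand side of \eqref{eq:by-parts-F} is exactly $\delta R(\mathcal Z_{h_1}(x,\cdot)|\mathcal Z_{h_2}(w_{zh_1},\cdot))$ with $R=FG$.

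Next I would check that $R=FG$ satisfies the hypotheses of Theorem~\ref{byparts-step1} for the given $h_1$, $h_2$, and $z$. Since $z\in L_2[0,T]$ and $h_1\in\mathrm{Supp}_\infty[0,T]$, the product $zh_1$ lies in $L_2[0,T]$, so $w_{zh_1}$ defined by \eqref{eq:function-w-phih} is a genuine element of $C_0'[0,T]$, and the remark following Theorem~\ref{byparts-step1} permits the hypothesis $z\in BV[0,T]$ to be relaxed to $z\in L_2[0,T]$. By Lemma~\ref{lemma:B} we have $R\in\mathcal B_{\mathcal A}(1;m)$; combined with observation~(3) preceding Lemma~\ref{lemma:1}, this gives $R(\mathcal Z_{h_1}(x,\cdot))\in\mathcal B_{\mathcal Ah_1}(1;m)$, and since the Gaussian kernel in the Wiener Integration Theorem (Lemma~\ref{lemma:ch-formula}) is bounded by $1$ while the corresponding $r\in L_1(\mathbb R^n)$, the functional $R(\mathcal Z_{h_1}(x,\cdot))$ is Wiener integrable. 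Likewise, Lemma~\ref{lemma:B} together with formula~\eqref{eq:ev-delta-h12} shows that $x\mapsto\delta R(\mathcal Z_{h_1}(x,\cdot)|\mathcal Z_{h_2}(w_{zh_1},\cdot))$ belongs to $\mathcal B_{\mathcal Ah_1}(1;m-1)$; applying Lemma~\ref{lemma:ch-formula} once more, now with the dilation by $\rho$ and again using the boundedness of the Gaussian kernel, yields the integrability requirement \eqref{step1-condition} for every $\rho>0$.

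With the hypotheses verified, Theorem~\ref{byparts-step1} applied to $R$ tells us that if either member of
\[
\int_{C_0[0,T]}^{\mathrm{anf}_q}\delta R(\mathcal Z_{h_1}(x,\cdot)|\mathcal Z_{h_2}(w_{zh_1},\cdot))\,\mathfrak m(dx)
=-iq\int_{C_0[0,T]}^{\mathrm{anf}_q}\langle{z,\mathcal Z_{h_2}(x,\cdot)}\rangle R(\mathcal Z_{h_1}(x,\cdot))\,\mathfrak m(dx)
\]
exists, then both do and equality holds. To see that the left member exists, note that, by the first display and formula~\eqref{eq:ev-delta-h12}, the functional $x\mapsto\delta R(\mathcal Z_{h_1}(x,\cdot)|\mathcal Z_{h_2}(w_{zh_1},\cdot))$ is a cylinder functional built on the orthogonal set $\mathcal Ah_1$ with an $L_1(\mathbb R^n)$ symbol, hence lies in $\mathcal B_{\mathcal Ah_1}(1;m-1)$; Theorem~\ref{them:1-st}, used with $\mathcal A$ replaced by $\mathcal Ah_1$ and with $h\equiv1$ (so that $\mathcal Z_h(x,\cdot)=x$ and the generalized analytic Feynman integral reduces to the ordinary one), then ensures that its analytic Feynman integral with parameter $q$ exists for every $q\in\mathbb R\setminus\{0\}$. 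Finally, substituting $R=FG$ and inserting the Leibniz rule from the first step converts the displayed identity into \eqref{eq:by-parts-F}, which completes the proof.

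I expect the only genuinely delicate part to be the bookkeeping in the second and third steps: one has to keep track of the fact that $\mathcal Ah_1$ --- and not $\mathcal Ah_2$ --- is the orthogonal family underlying every cylinder functional involved ($R(\mathcal Z_{h_1}(x,\cdot))$, $\delta R(\mathcal Z_{h_1}(x,\cdot)|\mathcal Z_{h_2}(w_{zh_1},\cdot))$, and the integrand on the right of \eqref{eq:by-parts-F}), and to recognize that the numbers $(\alpha_jh_2,zh_1)_2$ arising from the direction $\mathcal Z_{h_2}(w_{zh_1},\cdot)$ are merely constant coefficients that do not disturb membership in the $\mathcal B_{\mathcal Ah_1}$ classes. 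Once this is in hand, the argument is a direct concatenation of Theorem~\ref{byparts-step1}, Theorem~\ref{them:1-st}, and Lemmas~\ref{lemma:ch-formula}, \ref{lemma:1} and \ref{lemma:B}.
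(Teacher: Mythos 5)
Your proposal is correct and follows essentially the same route as the paper: set $R=FG$, use Lemma \ref{lemma:B} and the Leibniz rule \eqref{eq:ev-delta-h12} to identify the left-hand integrand with $\delta_{h_1,h_2}R(\cdot|w_{zh_1})$, verify the Wiener-integrability hypotheses of Theorem \ref{byparts-step1} via the $L_1$ membership of $r$ and its first partials, and conclude. The only cosmetic difference is that you establish the existence of the analytic Feynman integral of $\delta R$ by citing Theorem \ref{them:1-st} with $\mathcal A h_1$ and $h\equiv 1$, while the paper integrates the terms of \eqref{eq:ev001} directly with Lemma \ref{lemma:ch-formula} --- the same mechanism either way.
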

\begin{proof}
Again define $R(x)\equiv F(x)G(x)$ for $x\in C_0[0,T]$ and let 
$r(u_1,\ldots,u_n)\equiv f(u_1,\ldots,u_n)g(u_1,\ldots,u_n)$.
Then as noted in Lemma \ref{lemma:B} and its proof,
$R \in \mathcal B_{\mathcal A}(1;m)$, $\delta_{h_1,h_2} R (\cdot|w_{zh_1}) \in \mathcal B_{\mathcal Ah_1}(1;m-1)$, 
and  all the $k$-th order partial derivatives of $r$ are continuous and in $L_1(\mathbb R^n)$ 
for $k\in\{0,1,\ldots, m\}$. Hence $R(\rho x)$  is Wiener integrable on $C_0[0,T]$ for each $\rho>0$.
In addition, applying \eqref{eq:ev-delta-h12}, it follows that  for s-a.e. $x\in C_0[0,T]$,
\begin{equation}\label{eq:ev001}
\begin{aligned}
&\delta_{h_1,h_2} R(x|w_{zh_1}) 
   \equiv   \delta R(\mathcal Z_{h_1}(x,\cdot)|\mathcal Z_{h_2}(w_{zh_1},\cdot))\\
&=  F(\mathcal Z_{h_1}(x,\cdot))\delta G(\mathcal Z_{h_1}(x,\cdot)|\mathcal Z_{h_2}(w_{zh_1},\cdot))\\
&\qquad\qquad
+ \delta F(\mathcal Z_{h_1}(x,\cdot)|\mathcal Z_{h_2}(w_{zh_1},\cdot)) G(\mathcal Z_{h_1}(x,\cdot))\\
&=  f(\langle{\alpha_1 h_1,x}\rangle,\ldots, \langle{\alpha_n h_1,x}\rangle)
\sum_{j=1}^n  (\alpha_j h_2,zh_1)_2  g_j(\langle{\alpha_1 h_1,x}\rangle,\ldots, \langle{\alpha_n h_1,x}\rangle)\\
&\quad 
 +g(\langle{\alpha_1 h_1,x}\rangle,\ldots, \langle{\alpha_n h_1,x}\rangle)
\sum_{j=1}^n (\alpha_j h_2,zh_1)_2 f_j(\langle{\alpha_1 h_1,x}\rangle,\ldots, \langle{\alpha_n h_1,x}\rangle). 
\end{aligned}
\end{equation}
Now since $fg_j$ and $gf_l$ are all continuous and in $L_1(\mathbb R^n)$  for $j,l\in\{1,2,\ldots,n\}$,
it  is quite easy to see that $\delta R(\rho \mathcal Z_{h_1}(x,\cdot)|\rho \mathcal Z_{h_2}(w_{zh_1},\cdot))$,
as a function of $x$, is Wiener integrable for all $\rho>0$.
In addition, $\delta R(\mathcal Z_{h_1}(x,\cdot)|\mathcal Z_{h_2}(w_{zh_1},\cdot))$ is analytic Feynman integrable which 
can be seen by integrating the right-hand side of \eqref{eq:ev001} term by term. 
For example, applying \eqref{eq:ch-formula}, it follows that for any $j\in\{1,\ldots,n\}$,
\[
\begin{aligned}
&\int_{C_0[0,T]}^{\mathrm{anf}_q}
f(\langle{\alpha_1 h_1,x}\rangle,\ldots, \langle{\alpha_n h_1,x}\rangle)
(\alpha_j h_2, zh_1)_2\\
&\qquad\quad \times
  g_j(\langle{\alpha_1 h_1,x}\rangle,\ldots, \langle{\alpha_n h_1,x}\rangle)
\mathfrak{m}(dx)\\
&=(\alpha_j h_2, zh_1)_2 \bigg(\prod_{j=1}^n\frac{-iq}{2\pi\|\alpha_j h_1\|_2^2}\bigg)\\
&\qquad \times
\int_{\mathbb R^n}f(u_1,\ldots,u_n)g_j(u_1,\ldots,u_n)\exp\bigg\{\frac{iq}{2}\sum_{j=1}^n \frac{u_j^2}{\|\alpha_j h_1\|_2^2}\bigg\}du_1\cdots d u_n
\end{aligned}
\]
since $f(u_1,\ldots,u_n)g_j(u_1,\ldots,u_n)$ is continuous and in $L_1(\mathbb R^n)$.
Thus \eqref{eq:by-parts-F} follows from Theorem \ref{byparts-step1} above.
\qed\end{proof}

By choosing $p=2$ and $F=G$ in Theorem \ref{thm:by-parts-F},
we obtain the following corollary.


\begin{corollary}
Let $\mathcal A$, $m$, $z$, $h_1$,  and $w_{zh_1}$ be as in Theorem \ref{thm:by-parts-F}.
Let $F\in \mathcal B_{\mathcal A}(2;m)$ be given by \eqref{eq:cylinder-x}.
Then for any function $h_2$ in   $\mathrm{Supp}_{\infty}[0,T]$, 
and all $q\in \mathbb R \setminus\{0\}$, it follows that 
\[
\begin{aligned}
&\int_{C_0[0,T]}^{\mathrm{anf}_q}
F(\mathcal Z_{h_1}(x,\cdot))
\delta F(\mathcal Z_{h_1}(x,\cdot)|\mathcal Z_{h_2}(w_{zh_1},\cdot)) \mathfrak{m}(dx)\\
&=-\frac{iq}{2} \int_{C_0[0,T]}^{\mathrm{anf}_q} \langle{z,\mathcal Z_{h_2}(x,\cdot)}\rangle
\big[F(\mathcal Z_{h_1}(x,\cdot))\big]^2 \mathfrak{m}(dx).
\end{aligned}
\]
\end{corollary}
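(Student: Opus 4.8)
The plan is to specialize Theorem~\ref{thm:by-parts-F} to the case $p=p'=2$ together with $G=F$. First I would observe that, since $(1/2)+(1/2)=1$, a single functional $F\in\mathcal B_{\mathcal A}(2;m)$ simultaneously satisfies the hypotheses imposed on $F$ and on $G$ in Lemma~\ref{lemma:B} and in Theorem~\ref{thm:by-parts-F}: the product $R(x)=[F(x)]^2=f(\langle{\alpha_1,x}\rangle,\ldots,\langle{\alpha_n,x}\rangle)^2$ lies in $\mathcal B_{\mathcal A}(1;m)$ and $\delta_{h_1,h_2}R(\cdot|w_{zh_1})\in\mathcal B_{\mathcal Ah_1}(1;m-1)$ by that lemma, so every integrability and analyticity requirement needed to apply Theorem~\ref{thm:by-parts-F} is met.

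Next I would note that taking $G=F$ forces $\delta G(\mathcal Z_{h_1}(x,\cdot)|\mathcal Z_{h_2}(w_{zh_1},\cdot))=\delta F(\mathcal Z_{h_1}(x,\cdot)|\mathcal Z_{h_2}(w_{zh_1},\cdot))$, so the bracketed integrand on the left-hand side of \eqref{eq:by-parts-F} becomes $2\,F(\mathcal Z_{h_1}(x,\cdot))\,\delta F(\mathcal Z_{h_1}(x,\cdot)|\mathcal Z_{h_2}(w_{zh_1},\cdot))$; this is nothing but the Leibniz rule $\delta(F^2)=2F\,\delta F$ for the G\^ateaux-type derivative \eqref{eq:1st}, which is also visible directly from \eqref{eq:ev001}. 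Simultaneously the factor $F(\mathcal Z_{h_1}(x,\cdot))G(\mathcal Z_{h_1}(x,\cdot))$ on the right-hand side collapses to $[F(\mathcal Z_{h_1}(x,\cdot))]^2$. Substituting these two reductions into \eqref{eq:by-parts-F} and dividing through by $2$ produces precisely the claimed identity, and the ``if either member exists, both exist'' assertion is inherited verbatim from Theorem~\ref{thm:by-parts-F}.

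As for difficulties, there is essentially none of substance: the statement is a direct corollary and the argument is a mechanical substitution into \eqref{eq:by-parts-F}. The only point that deserves an explicit sentence is the verification that the exponent pairing $p=p'=2$ is admissible in Lemma~\ref{lemma:B}, which is immediate since $(1/2)+(1/2)=1$; everything else then follows from the already-established machinery.
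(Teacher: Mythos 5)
Your argument is correct and is exactly the route the paper takes: the corollary is obtained by setting $p=p'=2$ and $G=F$ in Theorem \ref{thm:by-parts-F}, noting that the bracketed integrand collapses to $2F\,\delta F$, and dividing by $2$. The admissibility check via Lemma \ref{lemma:B} with $(1/2)+(1/2)=1$ is the only hypothesis worth mentioning, and you handle it as the paper implicitly does.
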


\setcounter{equation}{0}
\section{Integration by parts formulas  involving generalized analytic Fourier--Feynman transforms}\label{sec:parts-fft}

In this section we establish integration by parts formulas involving
the $\mathcal Z_h$-GFFTs. We start this section with the existence theorem 
of the $L_p$ analytic $\mathcal Z_h$-GFFT of functionals  $F$ in 
$\mathcal B_{\mathcal A}(p;m)$.


\begin{theorem}\label{thm:tpq-class}
Let $\mathcal A=\{\alpha_1,\ldots,\alpha_n\}$ be an orthogonal set of nonzero functions in $L_2[0,T]$,
let $p\in[1,2]$ be given, let $m$ be a non-negative  integer, and 
let $F\in \mathcal B_{\mathcal A}(p;m)$ be given by equation \eqref{eq:cylinder-x}.
Then for any $k\in  \mathcal{O}_{\mathrm{Supp}_{\infty}}(\mathcal{A})$, 
and all $q\in \mathbb R \setminus\{0\}$,
the $L_p$  analytic $\mathcal Z_k$-GFFT $T_{q,k}^{(p)}(F)$ of $F$ exists 
and   is given by the formula
\begin{equation}\label{eq:gfft}
\begin{aligned}
T_{q,k}^{(p)}(F)(y)
&= \bigg(\prod_{j=1}^n\frac{-iq}{2\pi\|\alpha_jk\|_2^2}\bigg)^{1/2}
\int_{\mathbb R^n}f(u_1,\ldots,u_n)\\
& \qquad\qquad\qquad\times
\exp\bigg\{\frac{iq}{2}\sum_{j=1}^n \frac{[u_j-\langle{\alpha_j,y}\rangle]^2}{\|\alpha_jk\|_2^2}\bigg\}du_1\cdots du_n\\
&= \bigg(\prod_{j=1}^n\frac{-iq}{2\pi\|\alpha_jk\|_2^2}\bigg)^{1/2}
\int_{\mathbb R^n}f(u_1+\langle{\alpha_1,y}\rangle,\ldots,u_n+\langle{\alpha_n,y}\rangle)\\
& \qquad\qquad\qquad\times
\exp\bigg\{\frac{iq}{2}\sum_{j=1}^n \frac{u_j^2}{\|\alpha_jk\|_2^2}\bigg\}du_1\cdots du_n\\
\end{aligned}
\end{equation}
for s-a.e. $y\in C_0[0,T]$.
Furthermore, $T_{q,k}^{(p)}(F)\in  \mathcal  B_{\mathcal A}(p';m)$ where $(1/p)+(1/p')=1$.
\end{theorem}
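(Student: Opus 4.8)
The plan is to mirror the proof of Theorem~\ref{them:1-st}: first reduce the transform to a finite‑dimensional integral via the Wiener integration theorem (Lemma~\ref{lemma:ch-formula}), then supply the Fourier‑analytic input that the mean convergence in Definition~\ref{def:tpq} requires. Throughout write $a_j=\|\alpha_jk\|_2$; since $k\in\mathcal{O}_{\mathrm{Supp}_{\infty}}(\mathcal A)$ the set $\mathcal Ak=\{\alpha_1k,\ldots,\alpha_nk\}$ is orthogonal in $L_2[0,T]$, and $a_j>0$ because $\alpha_j\ne 0$ in $L_2[0,T]$ and $k\ne 0$ $m_L$-a.e. First I would check that for $\lambda>0$ and $y\in C_0[0,T]$, equations \eqref{eq:cylinder-x} and \eqref{eq:basic-rel} together with Lemma~\ref{lemma:ch-formula} applied to $\mathcal G=\mathcal Ak$ and scale $\lambda^{-1/2}$ give $T_{\lambda,k}(F)(y)=\phi_\lambda(\langle{\alpha_1,y}\rangle,\ldots,\langle{\alpha_n,y}\rangle)$, where
\[
\phi_\lambda(\vec w)=\Big(\prod_{j=1}^n\frac{\lambda}{2\pi a_j^2}\Big)^{1/2}\int_{\mathbb R^n}f(u_1+w_1,\ldots,u_n+w_n)\exp\Big\{-\frac{\lambda}{2}\sum_{j=1}^n\frac{u_j^2}{a_j^2}\Big\}d\vec u .
\]
The Gaussian factor here is analytic in $\lambda$, has modulus at most $1$ on $\widetilde{\mathbb C}_+$, and lies in $L_{p'}(\mathbb R^n)$ for every $\lambda\in\mathbb C_+$; so, exactly as in the proof of Theorem~\ref{them:1-st} (H\"older's inequality, the dominated convergence theorem, Fubini's theorem, Morera's theorem), $\lambda\mapsto\phi_\lambda(\vec w)$ extends to a function analytic on $\mathbb C_+$ and continuous on $\widetilde{\mathbb C}_+\setminus\{0\}$, and this extension is $T_{\lambda,k}(F)(y)$ — again a cylinder functional built from $\mathcal A$.

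Next I would identify the limit as $\lambda\to -iq$ pointwise, which lets me avoid redoing the continuation by hand. Fix $y$ and set $G_y(x)=F(y+\mathcal Z_k(x,\cdot))$; by \eqref{eq:basic-rel}, $G_y$ is the cylinder functional with orthogonal generating set $\mathcal Ak$ and generating function $\tilde f_y(\vec v)=f(v_1+\langle{\alpha_1,y}\rangle,\ldots,v_n+\langle{\alpha_n,y}\rangle)$, a translate of $f$, so $G_y\in\mathcal B_{\mathcal Ak}(p;m)$ for s-a.e.\ $y$. Since $\mathcal Z_k(\lambda^{-1/2}x,\cdot)=\lambda^{-1/2}\mathcal Z_k(x,\cdot)$ and $\mathcal Z_1(x,\cdot)=x$, one has $T_{\lambda,k}(F)(y)=\int_{C_0[0,T]}^{\mathrm{anw}_\lambda}G_y(\mathcal Z_1(x,\cdot))\mathfrak m(dx)$ for all $\lambda\in\mathbb C_+$, so $\lim_{\lambda\to-iq}T_{\lambda,k}(F)(y)$ is precisely the generalized analytic Feynman integral $\int_{C_0[0,T]}^{\mathrm{anf}_q}G_y(\mathcal Z_1(x,\cdot))\mathfrak m(dx)$ (the case $h\equiv 1$). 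By Theorem~\ref{them:1-st} this limit exists, and inserting $\tilde f_y$ into its formula reproduces the right‑hand side of \eqref{eq:gfft}. For $p=1$ this is all Definition~\ref{def:tpq} asks, so $T_{q,k}^{(1)}(F)$ exists s-a.e.\ and satisfies \eqref{eq:gfft}.

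The hard part is the case $1<p\le 2$, where Definition~\ref{def:tpq} demands convergence in the mean of order $p'$. Writing $\phi$ for the generating function on the right of \eqref{eq:gfft}, I would first use Lemma~\ref{lemma:ch-formula} with $\mathcal G=\mathcal A$ and the bound $\exp\{-\sum_j w_j^2/(2\rho^2\|\alpha_j\|_2^2)\}\le 1$ to dominate, for each $\rho>0$, the quantity $\int_{C_0[0,T]}|T_{\lambda,k}(F)(\rho y)-T_{q,k}^{(p)}(F)(\rho y)|^{p'}\mathfrak m(dy)$ by a constant times $\|\phi_\lambda-\phi\|_{L_{p'}(\mathbb R^n)}^{p'}$, reducing everything to $\|\phi_\lambda-\phi\|_{L_{p'}(\mathbb R^n)}\to 0$ as $\lambda\to-iq$ in $\mathbb C_+$. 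Now $\phi_\lambda=K_\lambda*f$ with $K_\lambda(\vec u)=\prod_j(\tfrac{\lambda}{2\pi a_j^2})^{1/2}\exp\{-\lambda u_j^2/(2a_j^2)\}$: one has $\|K_\lambda\|_{L_\infty(\mathbb R^n)}$ bounded for $\lambda$ near $-iq$ and $\|\widehat{K_\lambda}\|_{L_\infty(\mathbb R^n)}\le 1$ for $\lambda\in\widetilde{\mathbb C}_+\setminus\{0\}$, so Riesz--Thorin interpolation between the elementary $L_1\to L_\infty$ and $L_2\to L_2$ bounds makes the convolution operators $f\mapsto K_\lambda*f$ uniformly bounded $L_p(\mathbb R^n)\to L_{p'}(\mathbb R^n)$ for such $\lambda$, including $\lambda=-iq$. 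For Schwartz $f$ one has $\phi_\lambda-\phi=\mathcal F^{-1}[(\widehat{K_\lambda}-\widehat{K_{-iq}})\widehat f]$, so $\|\phi_\lambda-\phi\|_{p'}\le C\|(\widehat{K_\lambda}-\widehat{K_{-iq}})\widehat f\|_{p}\to 0$ by the Hausdorff--Young inequality and dominated convergence ($\widehat{K_\lambda}\to\widehat{K_{-iq}}$ pointwise, $|\widehat{K_\lambda}-\widehat{K_{-iq}}|\le 2$); for general $f\in L_p(\mathbb R^n)$ one approximates by Schwartz functions and invokes the uniform operator bound. This is the crux: because $\|K_\lambda\|_{L_1}$ blows up as $\lambda\to-iq$, the convergence cannot be obtained by a direct estimate on the convolution and must be routed through the Fourier transform.

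It remains to prove membership. Equation \eqref{eq:gfft} already exhibits $T_{q,k}^{(p)}(F)$ as the cylinder functional with orthogonal generating set $\mathcal A$ and generating function $\phi=K_{-iq}*f$, and $K_{-iq}*\,\cdot\,$ maps $L_p(\mathbb R^n)$ boundedly into $L_{p'}(\mathbb R^n)$ by the Hausdorff--Young inequality (equivalently, the $\lambda=-iq$ case of the Riesz--Thorin bound above), so $\phi\in L_{p'}(\mathbb R^n)$. Since the partial derivatives $f_{j_1,\ldots,j_k}$, $0\le k\le m$, are continuous and in $L_p(\mathbb R^n)$, differentiating under the integral sign in \eqref{eq:gfft} gives $\partial^\gamma\phi=K_{-iq}*(\partial^\gamma f)$ for $|\gamma|\le m$, each again in $L_{p'}(\mathbb R^n)$; the required continuity — and, when $p=1$, vanishing at infinity via the Riemann--Lebesgue lemma — is read off the same representation. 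Hence $T_{q,k}^{(p)}(F)\in\mathcal B_{\mathcal A}(p';m)$, which finishes the proof.
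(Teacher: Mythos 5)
Your proposal is correct in its overall strategy, but it takes a genuinely different route from the paper: the paper gives essentially no self-contained argument here, deferring the case $m=0$ to Theorems 4.7 and 4.8 of \cite{CC18-BJMA}, deducing existence for $m\ge 1$ from the inclusion $\mathcal B_{\mathcal A}(p;m)\subset\mathcal B_{\mathcal A}(p;0)$, and stating that the membership $T_{q,k}^{(p)}(F)\in\mathcal B_{\mathcal A}(p';m)$ is proved ``similarly.'' What you have done is reconstruct from scratch the Johnson--Skoug-type argument that presumably underlies the cited reference: the finite-dimensional reduction via Lemma \ref{lemma:ch-formula} applied to $\mathcal Ak$, the analytic continuation in $\lambda$ by the Morera/Fubini device of Theorem \ref{them:1-st}, the identification $T_{\lambda,k}(F)(y)=(K_\lambda*f)(\langle\alpha_1,y\rangle,\ldots,\langle\alpha_n,y\rangle)$, and --- the real content for $1<p\le 2$ --- the uniform $L_p\to L_{p'}$ bound for the Fresnel-type convolution operators obtained by Riesz--Thorin interpolation between the trivial $L_1\to L_\infty$ bound and the $L_2\to L_2$ bound $\|\widehat{K_\lambda}\|_\infty\le 1$, followed by a density argument to get $\|\phi_\lambda-\phi\|_{p'}\to 0$ and the passage back to Wiener space by a second application of Lemma \ref{lemma:ch-formula}. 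This is the right mechanism, and your observation that the convergence must be routed through the Fourier transform because $\|K_\lambda\|_{L_1}$ blows up is exactly the crux. The benefit of your route is that the theorem becomes self-contained; the cost is length, which is why the paper outsources it.

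One loose end you should tighten: in the final membership step you assert that the continuity of $\phi=K_{-iq}*f$ and of $\partial^\gamma\phi=K_{-iq}*(\partial^\gamma f)$ is ``read off the same representation.'' For $p=1$ this is fine ($K_{-iq}$ is bounded, so continuity follows from continuity of translation in $L_1$, and vanishing at infinity from the Riemann--Lebesgue lemma after completing the square in the Fresnel kernel). For $1<p\le 2$, however, $K_{-iq}*g$ for $g\in L_p$ is defined only as an element of $L_{p'}$ via the Fourier transform, and an $L_{p'}$ function with $p'\ge 2$ has no automatic continuous representative; since $\mathcal B_{\mathcal A}(p';m)$ as defined in the paper requires the generating function and its derivatives up to order $m$ to be continuous, this point needs an explicit argument (or the same convention the cited reference uses). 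This is a shared subtlety with the paper's own citation-based proof rather than a defect peculiar to your approach, but as written your last paragraph claims more than it proves for $p>1$.
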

\begin{proof}
First, in the case $m=0$,
the proof given in \cite[Theorems 4.7 and 4.8]{CC18-BJMA}  with the current 
hypotheses on $F$ and $k$ also works here.
Now let an orthogonal set $\mathcal A=\{\alpha_1,\ldots,\alpha_n\}$  
of   nonzero functions in $L_2[0,T]$, $p\in[1,2]$, $m \in \{1,2,\ldots\}$   be given and 
let $F\in \mathcal  B_{\mathcal A}(p;m)$.
Since $\mathcal  B_{\mathcal A}(p;m)\subset \mathcal  B_{\mathcal A}(p;0)$,
we know that    $T_{q,k}^{(p)}(F)$ exists and is given by equation
\eqref{eq:gfft}.
The proof that $T_{q,k}^{(p)}(F)$ belongs to $\mathcal  B_{\mathcal A}(p';m)$ 
for $m\ge 1$ is similar to the proof in \cite{CC18-BJMA} for the case $m=0$.
\qed\end{proof}

In view of Theorems \ref{thm:by-parts-F} and \ref{thm:tpq-class}, we get the following corollary.


\begin{corollary}\label{coro:condition-add}
Let $\mathcal A=\{\alpha_1,\ldots,\alpha_n\}$ be an orthogonal set of nonzero functions in $L_2[0,T]$,
let $m$ be a positive  integer, and given $z\in L_2[0,T]$ and $h_1\in  \mathcal{O}_{\mathrm{Supp}_{\infty}} (\mathcal{A})$,
let $w_{zh_1}\in C_0'[0,T]$  be given by \eqref{eq:function-w-phih}.
Let $F$ and $G$  in $\mathcal B_{\mathcal A}(2;m)$ be given by \eqref{eq:cylinder-x} and \eqref{eq:cylinder-G},
respectively.  
Then  any functions  $k_1, k_2 \in \mathcal{O}_{\mathrm{Supp}_{\infty}}(\mathcal{A})$, 
$h_2 \in \mathrm{Supp}_{\infty}[0,T]$, and any $q_1, q_2, q_3 \in \mathbb R \setminus\{0\}$,
it follows that 
\[
\begin{aligned}
&\int_{C_0[0,T]}^{\mathrm{anf}_{q_3}}
\big[T_{q_1,k_1}^{(2)}(F)(\mathcal Z_{h_1}(x,\cdot))\delta T_{q_2,k_2}^{(2)}(G)(\mathcal Z_{h_1}(x,\cdot))|\mathcal Z_{h_2}(w_{zh_1},\cdot)) \\
& \qquad\qquad
+\delta T_{q_1,k_1}^{(2)}(F)(\mathcal Z_{h_1}(x,\cdot)|\mathcal Z_{h_2}(w_{zh_1},\cdot))
T_{q_2,k_2}^{(2)}(G)(\mathcal Z_{h_1}(x,\cdot))\big]\mathfrak{m}(dx)\\
&=-iq_3 \int_{C_0[0,T]}^{\mathrm{anf}_{q_3}} \langle{z,\mathcal Z_{h_2}(x,\cdot)}\rangle
T_{q_1,k_1}^{(2)}(F)(\mathcal Z_{h_1}(x,\cdot))T_{q_2,k_1}^{(2)}(G)(\mathcal Z_{h_1}(x,\cdot)) \mathfrak{m}(dx).
\end{aligned}
\]
\end{corollary}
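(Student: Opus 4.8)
The plan is to reduce the statement to a single invocation of Theorem~\ref{thm:by-parts-F}, the substituted functionals being the transforms themselves. First I would apply Theorem~\ref{thm:tpq-class} with $p=2$: since $F,G\in\mathcal B_{\mathcal A}(2;m)$ and $k_1,k_2\in\mathcal O_{\mathrm{Supp}_{\infty}}(\mathcal A)$, the $L_2$ analytic $\mathcal Z_{k_1}$- and $\mathcal Z_{k_2}$-GFFTs $T_{q_1,k_1}^{(2)}(F)$ and $T_{q_2,k_2}^{(2)}(G)$ exist for s-a.e. $y\in C_0[0,T]$, and because $1/p+1/p'=1$ forces $p'=2$ here, both again belong to $\mathcal B_{\mathcal A}(2;m)$. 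Moreover, the second representation in~\eqref{eq:gfft} exhibits them explicitly as cylinder functionals over the \emph{same} orthogonal set $\mathcal A$: writing $T_{q_1,k_1}^{(2)}(F)(x)=\widetilde f(\langle{\alpha_1,x}\rangle,\ldots,\langle{\alpha_n,x}\rangle)$ and $T_{q_2,k_2}^{(2)}(G)(x)=\widetilde g(\langle{\alpha_1,x}\rangle,\ldots,\langle{\alpha_n,x}\rangle)$ with $\widetilde f,\widetilde g$ the indicated Gaussian-weighted integrals of $f,g$, the partial derivatives of $\widetilde f,\widetilde g$ up to order $m$ are continuous and in $L_2(\mathbb R^n)$ by the final assertion of Theorem~\ref{thm:tpq-class}.

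Next I would apply Theorem~\ref{thm:by-parts-F} verbatim, with $F$ replaced by $T_{q_1,k_1}^{(2)}(F)$, $G$ replaced by $T_{q_2,k_2}^{(2)}(G)$, the orthogonal set still $\mathcal A$, the exponent $p=2$ (whence $p'=2$), the same $m$, $z\in L_2[0,T]$, $h_1\in\mathcal O_{\mathrm{Supp}_{\infty}}(\mathcal A)$ and $w_{zh_1}$ given by~\eqref{eq:function-w-phih}, the same $h_2\in\mathrm{Supp}_{\infty}[0,T]$, and the Feynman parameter $q=q_3$. All hypotheses of Theorem~\ref{thm:by-parts-F} — that the two cylinder functionals lie in $\mathcal B_{\mathcal A}(2;m)$ with the stated regularity of their generating functions, so that (via Lemma~\ref{lemma:B}) the product lies in $\mathcal B_{\mathcal A}(1;m)$ and its first variation in $\mathcal B_{\mathcal A h_1}(1;m-1)$ — are exactly what the previous paragraph supplied. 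The conclusion of Theorem~\ref{thm:by-parts-F} is then literally the displayed identity of the corollary, with the inner transforms composed with the Gaussian path $\mathcal Z_{h_1}(x,\cdot)$ and the first variations taken in the direction $\mathcal Z_{h_2}(w_{zh_1},\cdot)$; here one also uses observation~(3) following Theorem~\ref{them:1-st} and Lemma~\ref{lemma:1} to pass between the abstract and the cylinder forms of $\delta T_{q_1,k_1}^{(2)}(F)$ and $\delta T_{q_2,k_2}^{(2)}(G)$. (On the right-hand side the factor $T_{q_2,k_1}^{(2)}(G)$ is to be read as $T_{q_2,k_2}^{(2)}(G)$, consistently with this substitution.)

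The only genuine point requiring care — and hence the main, though light, obstacle — is the bookkeeping that the GFFT does not enlarge the underlying orthogonal set: one must confirm that $T_{q_1,k_1}^{(2)}(F)$ and $T_{q_2,k_2}^{(2)}(G)$ are cylinder functionals based on the very same $\{\alpha_1,\ldots,\alpha_n\}$, so that their product falls under the scope of Lemma~\ref{lemma:B} and Theorem~\ref{thm:by-parts-F}. This is immediate from the second line of~\eqref{eq:gfft}, where the change of variable $u_j\mapsto u_j+\langle{\alpha_j,y}\rangle$ merely shifts the arguments while the Gaussian weight still involves only $\|\alpha_j k\|_2^2$, leaving the cylinder structure over $\mathcal A$ intact. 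Once this is noted, no estimation remains: the existence of the generalized analytic Feynman integrals on both sides and the equality itself are inherited directly from Theorem~\ref{thm:by-parts-F}.
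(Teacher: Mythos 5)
Your proposal is correct and follows exactly the paper's route: the paper justifies this corollary in one line (``In view of Theorems \ref{thm:by-parts-F} and \ref{thm:tpq-class}''), i.e.\ by using Theorem \ref{thm:tpq-class} with $p=2$ (so $p'=2$) to place $T_{q_1,k_1}^{(2)}(F)$ and $T_{q_2,k_2}^{(2)}(G)$ in $\mathcal B_{\mathcal A}(2;m)$ and then substituting them into Theorem \ref{thm:by-parts-F}. Your additional remarks --- that the GFFT preserves the cylinder structure over the same set $\mathcal A$, and that $T_{q_2,k_1}^{(2)}(G)$ on the right-hand side should read $T_{q_2,k_2}^{(2)}(G)$ --- are both accurate and only make the argument more explicit than the paper's.
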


In our next theorem we show that the transform with respect to the first argument 
of the variation equals the variation of the transform.


\begin{theorem}\label{thm:tpq}
Let $\mathcal A=\{\alpha_1,\ldots,\alpha_n\}$ be an orthogonal set of nonzero functions in $L_2[0,T]$,
let $p\in[1,2]$ be given, let $m$ be a positive  integer,   
and  let $F\in \mathcal B_{\mathcal A}(p;m)$ be given by equation \eqref{eq:cylinder-x}.
Also, let $w_z\in C_0'[0,T]$ be given by \eqref{eq:function-wz} above. 
Then  for any functions $h_1 \in   \mathcal{O}_{\mathrm{Supp}_{\infty}}(\mathcal{A})$,
$\{h_2, k\}\subset\mathrm{Supp}_{\infty}[0,T]$ with $kh_1 \in  \mathcal{O}_{\mathrm{Supp}_{\infty}}(\mathcal{A})$
(or $k  \in  \mathcal{O}_{\mathrm{Supp}_{\infty}}(\mathcal{A}h_1 )$),  
 all $q\in \mathbb R \setminus\{0\}$, and   s-a.e. $y\in C_0[0,T]$, it follows that 
\begin{equation}\label{eq:tpq-var=vat-tpq}
T_{q,k}^{(p)}(\delta_{h_1,h_2}F (\cdot|w_z)) 
=\delta_{h_1,h_2} T_{q,kh_1}^{(p)}(F)(y|w_z)
\end{equation}
which, as a function of $y$, is an element of $\mathcal B_{\mathcal Ah_1}(p';m-1)$.
Also, both expressions 
in \eqref{eq:tpq-var=vat-tpq} are given by the expression
\begin{equation}\label{eq:gfft-delta}
\begin{aligned}
&\bigg(\prod_{j=1}^n\frac{-iq}{2\pi\|\alpha_jkh_1\|_2^2}\bigg)^{1/2}
\int_{\mathbb R^n}\bigg[\sum_{j=1}^n (\alpha_j h_2,z)_2 \\
& \,\, \times
f_j(u_1+\langle{\alpha_1h_1,y}\rangle,\ldots,u_n+\langle{\alpha_nh_1,y}\rangle)\bigg] 
\exp\bigg\{\frac{iq}{2}\sum_{j=1}^n \frac{u_j^2}{\|\alpha_jkh_1\|_2^2}\bigg\}du_1\cdots du_n.
\end{aligned}
\end{equation} 
 \end{theorem}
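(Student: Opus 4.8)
The plan is to verify \eqref{eq:tpq-var=vat-tpq} by computing both sides explicitly and matching them against the closed form \eqref{eq:gfft-delta}. First I would apply Lemma~\ref{lemma:1} to the functional $F\in\mathcal B_{\mathcal A}(p;m)$: it gives
\[
\delta_{h_1,h_2}F(x|w_z)=\sum_{j=1}^n(\alpha_jh_2,z)_2\,f_j(\langle\alpha_1h_1,x\rangle,\ldots,\langle\alpha_nh_1,x\rangle),
\]
and, crucially, it tells us that $\delta_{h_1,h_2}F(\cdot|w_z)\in\mathcal B_{\mathcal Ah_1}(p;m-1)$. Since each $f_j\in L_p(\mathbb R^n)\cap C(\mathbb R^n)$ with all partials up through order $m-1$ continuous and in $L_p$, this functional has exactly the form needed to invoke Theorem~\ref{thm:tpq-class}, but now with the orthogonal set $\mathcal Ah_1$ in place of $\mathcal A$ and with $k\in\mathcal O_{\mathrm{Supp}_\infty}(\mathcal Ah_1)$ (this is where the hypothesis $kh_1\in\mathcal O_{\mathrm{Supp}_\infty}(\mathcal A)$, equivalently $k\in\mathcal O_{\mathrm{Supp}_\infty}(\mathcal Ah_1)$, is used). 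Applying the second formula of \eqref{eq:gfft} to $\delta_{h_1,h_2}F(\cdot|w_z)$, pulling the finite sum $\sum_j(\alpha_jh_2,z)_2$ outside the integral, and noting that $\|(\alpha_i h_1)k\|_2=\|\alpha_i kh_1\|_2$, yields exactly expression \eqref{eq:gfft-delta} for the left-hand side; the same theorem also certifies that the result lies in $\mathcal B_{\mathcal Ah_1}(p';m-1)$.

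For the right-hand side, I would start from the explicit formula for $T^{(p)}_{q,kh_1}(F)$ given by Theorem~\ref{thm:tpq-class} (applied to $F\in\mathcal B_{\mathcal A}(p;m)$ with the function $kh_1\in\mathcal O_{\mathrm{Supp}_\infty}(\mathcal A)$), namely the second line of \eqref{eq:gfft} with $k$ replaced by $kh_1$. By the ``Furthermore'' clause there, $T^{(p)}_{q,kh_1}(F)$ is itself a cylinder functional in $\mathcal B_{\mathcal A}(p';m)$, with generating function
\[
\widetilde f(\vec v)=\Bigl(\prod_{j=1}^n\tfrac{-iq}{2\pi\|\alpha_jkh_1\|_2^2}\Bigr)^{1/2}\int_{\mathbb R^n}f(u_1+v_1,\ldots,u_n+v_n)\exp\Bigl\{\tfrac{iq}{2}\sum_{j=1}^n\tfrac{u_j^2}{\|\alpha_jkh_1\|_2^2}\Bigr\}\,d\vec u.
\]
Then I would apply Lemma~\ref{lemma:1} once more, this time to $T^{(p)}_{q,kh_1}(F)$, to get $\delta_{h_1,h_2}T^{(p)}_{q,kh_1}(F)(y|w_z)=\sum_j(\alpha_jh_2,z)_2\,\widetilde f_j(\langle\alpha_1h_1,y\rangle,\ldots,\langle\alpha_nh_1,y\rangle)$. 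It remains to identify $\widetilde f_j$ with the inner integral appearing in \eqref{eq:gfft-delta}: differentiating $\widetilde f$ under the integral sign in the $j$th slot moves the derivative onto $f$ (since the argument is $u_j+v_j$), producing $\widetilde f_j(\vec v)=\bigl(\prod\cdots\bigr)^{1/2}\int_{\mathbb R^n}f_j(u_1+v_1,\ldots)\exp\{\cdots\}\,d\vec u$. Substituting $v_i=\langle\alpha_ih_1,y\rangle$ gives precisely \eqref{eq:gfft-delta}, so both sides of \eqref{eq:tpq-var=vat-tpq} coincide.

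The one genuine technical point — the main obstacle — is justifying differentiation under the integral sign in the step $\widetilde f_j(\vec v)=\partial_{v_j}\widetilde f(\vec v)$, together with the interchange of $\partial_\mu|_{\mu=0}$ (defining the first variation) with the Gaussian-type integral defining the transform. Both are handled by a dominated-convergence argument: because $F\in\mathcal B_{\mathcal A}(p;m)$ with $m\ge1$, the partial derivative $f_j$ is continuous and lies in $L_p(\mathbb R^n)$, and $|\exp\{\tfrac{iq}{2}\sum u_j^2/\|\alpha_jkh_1\|_2^2\}|\le1$, so by Hölder's inequality (exactly as in the proof of Theorem~\ref{them:1-st}) the integrand $|f_j(\vec u+\vec v)|\cdot|\exp\{\cdots\}|$ is dominated by an $L_1(\mathbb R^n)$ function uniformly for $\vec v$ in a bounded set; continuity of $f_j$ then lets us pass the difference quotient to the limit. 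The remaining steps — pulling finite constants out of integrals, relabeling $\|\alpha_i h_1 k\|_2=\|\alpha_i kh_1\|_2$, and the change of variable $u_j\mapsto u_j-\langle\alpha_j,y\rangle$ relating the two displayed forms in \eqref{eq:gfft} — are routine. The membership claim $\delta_{h_1,h_2}T^{(p)}_{q,kh_1}(F)(\cdot|w_z)\in\mathcal B_{\mathcal Ah_1}(p';m-1)$ follows from Lemma~\ref{lemma:1} applied to $T^{(p)}_{q,kh_1}(F)\in\mathcal B_{\mathcal A}(p';m)$, consistent with the left-hand identification above.
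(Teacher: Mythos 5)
Your proposal is correct and follows essentially the same route as the paper: both sides are computed explicitly and matched against \eqref{eq:gfft-delta}, using Lemma \ref{lemma:1} for the variation, the explicit transform formula \eqref{eq:gfft} (with $\|\alpha_j h_1 k\|_2=\|\alpha_j kh_1\|_2$), and a dominated-convergence justification for differentiating under the integral sign — the paper cites Folland's Theorem 2.27 for exactly the step you flag as the main technical point. The only cosmetic difference is that for the left-hand side you invoke Theorem \ref{thm:tpq-class} applied to $\delta_{h_1,h_2}F(\cdot|w_z)\in\mathcal B_{\mathcal Ah_1}(p;m-1)$ as a black box, whereas the paper evaluates the translated Feynman integral directly via the Wiener integration formula \eqref{eq:ch-formula} with $\mathcal G=\mathcal Akh_1$; these amount to the same computation.
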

\begin{proof}
First, using \eqref{eq:1st} with $F$, $x$ and $w$  replaced with $T_{q,kh_1}^{(p)}(F)$,
$y$ and $w_z$, and \eqref{eq:gfft} with $k$ and $y$ replaced with $kh_1$ and 
$\mathcal Z_{h_1}(y,\cdot)$, respectively, \eqref{eq:basic-rel}, 
and \eqref{eq:basic-rel2},
we obtain that
\begin{equation}\label{eq:folland}
\begin{aligned}
 &\delta_{h_1,h_2} T_{q,kh_1}^{(p)}(F)(y|w_z) 
 \equiv \delta T_{q,kh_1}^{(p)}(F)(\mathcal Z_{h_1}(y,\cdot))|\mathcal Z_{h_2}(w_z,\cdot))\\
&=\frac{\partial}{\partial \mu}
 T_{q,kh_1}^{(p)}(F)(\mathcal Z_{h_1}(y,\cdot)+\mu \mathcal Z_{h_2}(w_z,\cdot)) \bigg|_{\mu=0}\\
&= \bigg(\prod_{j=1}^n\frac{-iq}{2\pi\|\alpha_jkh_1\|_2^2}\bigg)^{1/2}
\int_{\mathbb R^n}
\frac{\partial}{\partial \mu}f(u_1+\langle{\alpha_1h_1,y}\rangle+\mu\langle{\alpha_1h_2,w_z}\rangle,\\
&\qquad\qquad\qquad\qquad\qquad\qquad
\ldots,u_n+\langle{\alpha_nh_1,y}\rangle+\mu\langle{\alpha_nh_2,w_z}\rangle)\bigg|_{\mu=0}\\
& \qquad\qquad\qquad\qquad\qquad\times
\exp\bigg\{\frac{iq}{2}\sum_{j=1}^n \frac{u_j^2}{\|\alpha_jkh_1\|_2^2}\bigg\} du_1\cdots du_n\\
&= \bigg(\prod_{j=1}^n\frac{-iq}{2\pi\|\alpha_jkh_1\|_2^2}\bigg)^{1/2}
\int_{\mathbb R^n}\bigg[\sum_{j=1}^n (\alpha_j h_2,z)_2 \\
& \qquad\qquad\qquad\qquad\times
f_j(u_1+\langle{\alpha_1h_1,y}\rangle,\ldots,u_n+\langle{\alpha_nh_1,y}\rangle)\bigg] \\
& \qquad\qquad\qquad\qquad\times
\exp\bigg\{\frac{iq}{2}\sum_{j=1}^n \frac{u_j^2}{\|\alpha_jkh_1\|_2^2}\bigg\}du_1\cdots du_n.\\
\end{aligned}
\end{equation}
The second equality of \eqref{eq:folland} follows from the fact 
that $T_{q,k}^{(p)}(F)$ is in  $\mathcal  B_{\mathcal A}(p';m)$,
and Theorem 2.27 in \cite{folland}.

Next, using \eqref{eq:ev-delta}, it follows that
\[
\begin{aligned}
& T_{q,k}^{(p)}(\delta_{h_1,h_2} F(\cdot|w_z))(y)\\
&=\int_{C_0[0,T]}^{\mathrm{anf}_q}\delta_{h_1,h_2} F(y+\mathcal Z_k(x,\cdot)|w_z) \mathfrak m( dx)\\
&=\int_{C_0[0,T]}^{\mathrm{anf}_q}\delta  F(\mathcal Z_{h_1}( y+\mathcal Z_k(x,\cdot),\cdot)
|\mathcal Z_{h_2}(w_z,\cdot))\mathfrak m( dx)\\
&=\int_{C_0[0,T]}^{\mathrm{anf}_q} 
\sum_{j=1}^n(\alpha_j h_2,z)_2\\
&\qquad \times 
f_j(\langle{\alpha_1h_1,y+\mathcal Z_k(x,\cdot) }\rangle, 
\ldots ,\langle{\alpha_n h_1,y+\mathcal Z_k(x,\cdot) }\rangle) 
\mathfrak m( dx)\\
&=\int_{C_0[0,T]}^{\mathrm{anf}_q} 
\sum_{j=1}^n(\alpha_j h_2,z)_2 \\
&\qquad \times 
f_j(\langle{\alpha_1h_1,y}\rangle +\langle{\alpha_1kh_1,x}\rangle, 
\ldots ,\langle{\alpha_nh_1,y}\rangle +\langle{\alpha_nkh_1,x}\rangle)  
\mathfrak m( dx).
\end{aligned}
\]
 Then, evaluating the above analytic Feynman integral
together with use of \eqref{eq:ch-formula} with $\mathcal G$ replaced 
with $\mathcal Akh_1$, we obtain equation \eqref{eq:tpq-var=vat-tpq} for
s-a.e. $y\in C_0[0,T]$. Finally, $\delta_{h_1,h_2} T_{q,kh_1}^{(p)}(F)(\cdot|w)$
is an element of  $\mathcal B_{\mathcal Ah_1}(p';m-1)$,
since $T_{q,kh_1}^{(p)}(F)(\cdot|w)$ is an element of $\mathcal B_{\mathcal A}(p';m)$.
\qed\end{proof}


\begin{remark}
A close examination of the proof of   Theorem \ref{thm:tpq}
shows that $\delta_{h_1,h_2} T_{q,k}^{(p)}(F)(y|w_z)$
is an element of $\mathcal B_{\mathcal Ah_1}(p';m-1)$ and is given by 
the expression  \eqref{eq:gfft-delta} with $kh_1$ replaced with
$k$ for s-a.e. $y\in C_0'[0,T]$.
\end{remark}

\par
In our next theorems we obtain   parts formulas 
involving GFFTs.


\begin{theorem}\label{thmby-parts-gfft1}
Let $\mathcal A$, $m$,  $z$, $h_1$,   $w_{zh_1}$,  $F$, and $G$ 
be as in Corollary \ref{coro:condition-add}.
Then  for any functions  
$h_2, k\in\mathrm{Supp}_{\infty}[0,T]$ with $kh_1 \in  \mathcal{O}_{\mathrm{Supp}_{\infty}}(\mathcal{A})$
(or $k  \in  \mathcal{O}_{\mathrm{Supp}_{\infty}}(\mathcal{A}h_1 )$), 
 and  all  $q_1, q_2, q_3 \in \mathbb R \setminus\{0\}$, 
it follows that 
\begin{equation} \label{eq:by-parts-gfft}
\begin{aligned}
&\int_{C_0[0,T]}^{\mathrm{anf}_{q_3}}
\big[ T_{q_1,kh_1}^{(2)}(F)(\mathcal Z_{h_1}(x,\cdot))
\delta T_{q_2,kh_1}^{(2)}(G)(\mathcal Z_{h_1}(x,\cdot)|\mathcal Z_{h_2}(w_{zh_1},\cdot)) \\
& \qquad\qquad
+\delta T_{q_1,kh_1}^{(2)}(F)(\mathcal Z_{h_1}(x,\cdot)|\mathcal Z_{h_2}(w_{zh_1},\cdot))
T_{q_2,kh_1}^{(2)}(G) (\mathcal Z_{h_1}(x,\cdot))\big]\mathfrak{m}(dx)\\
&=-iq_3 \int_{C_0[0,T]}^{\mathrm{anf}_{q_3}} \langle{z,\mathcal Z_{h_2}(x,\cdot)}\rangle
T_{q_1,kh_1}^{(2)}(F)(\mathcal Z_{h_1}(x,\cdot))T_{q_2,kh_1}^{(2)}(G)(\mathcal Z_{h_1}(x,\cdot)) \mathfrak{m}(dx).
\end{aligned}
\end{equation}
\end{theorem}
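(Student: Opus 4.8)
The plan is to deduce Theorem \ref{thmby-parts-gfft1} directly from the integration by parts formula for the generalized analytic Feynman integral, namely Theorem \ref{thm:by-parts-F}, after first recognizing that the two GFFTs occurring in \eqref{eq:by-parts-gfft} are themselves cylinder functionals of exactly the class to which Theorem \ref{thm:by-parts-F} applies. In short: transform first, then integrate by parts.

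First I would put $\widetilde F \equiv T_{q_1,kh_1}^{(2)}(F)$ and $\widetilde G \equiv T_{q_2,kh_1}^{(2)}(G)$. Since $F,G\in\mathcal B_{\mathcal A}(2;m)$ and, by hypothesis, $kh_1\in\mathcal{O}_{\mathrm{Supp}_{\infty}}(\mathcal{A})$, Theorem \ref{thm:tpq-class} applied with $p=2$ (so that $p'=2$) tells us that $\widetilde F$ and $\widetilde G$ exist for s-a.e.\ $y\in C_0[0,T]$, that both belong to $\mathcal B_{\mathcal A}(2;m)$, and that they have the cylinder representations
\[
\widetilde F(x)=\widetilde f(\langle{\alpha_1,x}\rangle,\ldots,\langle{\alpha_n,x}\rangle),
\qquad
\widetilde G(x)=\widetilde g(\langle{\alpha_1,x}\rangle,\ldots,\langle{\alpha_n,x}\rangle),
\]
where $\widetilde f,\widetilde g:\mathbb R^n\to\mathbb C$ are the functions produced by the integral formula \eqref{eq:gfft} (with $k$ there replaced by $kh_1$), and hence have all their $k$th-order partials for $k\in\{0,1,\ldots,m\}$ continuous and in $L_2(\mathbb R^n)$. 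I would then apply Theorem \ref{thm:by-parts-F} with $F$ and $G$ replaced by $\widetilde F$ and $\widetilde G$ — legitimate because $\widetilde F\in\mathcal B_{\mathcal A}(2;m)$, $\widetilde G\in\mathcal B_{\mathcal A}(2;m)$ and $\tfrac12+\tfrac12=1$ — and with the same $z$, $h_1$, $w_{zh_1}$, $h_2$, $q_3$. The resulting identity \eqref{eq:by-parts-F} is, term by term, exactly \eqref{eq:by-parts-gfft}: by Definition \ref{def:1st-var} the expressions $\delta\widetilde F(\mathcal Z_{h_1}(x,\cdot)|\mathcal Z_{h_2}(w_{zh_1},\cdot))$ and $\delta\widetilde G(\mathcal Z_{h_1}(x,\cdot)|\mathcal Z_{h_2}(w_{zh_1},\cdot))$ are precisely the variations of $T_{q_1,kh_1}^{(2)}(F)$ and $T_{q_2,kh_1}^{(2)}(G)$ appearing in the bracket on the left, and the right-hand side matches after the same substitution. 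Along the way, Lemma \ref{lemma:B} and Lemma \ref{lemma:1} applied to $\widetilde F,\widetilde G$ give $\widetilde F\widetilde G\in\mathcal B_{\mathcal A}(1;m)$ and $\delta_{h_1,h_2}(\widetilde F\widetilde G)(\cdot|w_{zh_1})\in\mathcal B_{\mathcal A h_1}(1;m-1)$, which is what supplies the integrability needed so that both generalized analytic Feynman integrals in \eqref{eq:by-parts-gfft} converge (this is exactly the content built into the hypotheses of Theorem \ref{thm:by-parts-F}).

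I do not anticipate a genuine obstacle; the only point needing care is index bookkeeping. One must check that the exponent $p'=2$ delivered by Theorem \ref{thm:tpq-class} is precisely what is required to feed $\widetilde F$ and $\widetilde G$ back into the conjugate-pair hypothesis $(1/p)+(1/p')=1$ of Lemma \ref{lemma:B} and Theorem \ref{thm:by-parts-F} (this is why the statement restricts to $p=2$), and that it is $kh_1$, not $k$, that plays the role of the Gaussian-process parameter of the transform, so the hypothesis $kh_1\in\mathcal{O}_{\mathrm{Supp}_{\infty}}(\mathcal{A})$ is exactly the condition under which Theorem \ref{thm:tpq-class} is applicable and under which $\widetilde F,\widetilde G$ remain cylinder functionals over the orthogonal set $\mathcal A$. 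Once these identifications are made, the conclusion is immediate.
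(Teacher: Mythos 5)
Your proposal is correct and follows essentially the same route as the paper: the paper likewise sets $R=T_{q_1,kh_1}^{(2)}(F)\,T_{q_2,kh_1}^{(2)}(G)$, invokes Theorem \ref{thm:tpq-class} to place both transforms in $\mathcal B_{\mathcal A}(2;m)$, uses Lemmas \ref{lemma:1} and \ref{lemma:B} for the regularity of $R$ and its variation, and then applies Theorem \ref{thm:by-parts-F} with $F$ and $G$ replaced by the two transforms. Your added remarks on the conjugate-exponent bookkeeping ($p=p'=2$) and on $kh_1$ being the relevant Gaussian parameter are exactly the right points of care.
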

\begin{proof}
For $x\in C_0[0,T]$, let $R(x)=T_{q_1,kh_1}^{(2)}(F)(x)T_{q_2,kh_1}^{(2)}(G)(x)$.
Then by Theorem \ref{thm:tpq-class},
$T_{q_1,kh_1}^{(2)}(F)$ and $T_{q_2,kh_1}^{(2)}(G)$ are in $\mathcal B_{\mathcal A}(2;m)$. 
Therefore, by Lemma \ref{lemma:B}, $R$ is in $\mathcal B_{\mathcal A}(1;m)$. 
Moreover, by Lemma \ref{lemma:1}, $\delta_{h_1,h_2}R(x|w_{zh_1})$,
as a function of $x$, is an element of $\mathcal B_{\mathcal Ah_1}(1;m-1)$. 
Thus equation \eqref{eq:by-parts-gfft} follows from Theorem \ref{thm:by-parts-F}
with $F$ and $G$ replaced with $T_{q_1,kh_1}^{(2)}(F)$ and $T_{q_2,kh_1}^{(2)}(G)$, respectively.
\qed\end{proof}


\begin{theorem}\label{thmby-parts-gfft2} 
Let $\mathcal A$,  $p$, $m$,   $F$, and $w_z$  be as in Theorem \ref{thm:tpq}.
Also, let  $G\in  \mathcal B_{\mathcal A}(p;m)$ be given by \eqref{eq:cylinder-G}.
Then  for any functions  
$h_2, k\in\mathrm{Supp}_{\infty}[0,T]$ with $kh_1 \in  \mathcal{O}_{\mathrm{Supp}_{\infty}}(\mathcal{A})$
(or $k  \in  \mathcal{O}_{\mathrm{Supp}_{\infty}}(\mathcal{A}h_1 )$), 
 and  all $q_1,q_2 \in \mathbb R \setminus\{0\}$,
it follows that 
\begin{equation} \label{eq:by-parts-gfft2}
\begin{aligned}
&
\int_{C_0[0,T]}^{\mathrm{anf}_{q_2} }
\big[ F (\mathcal Z_{h_1}(x,\cdot))
\delta T_{q_1,kh_1}^{(p)}(G)(\mathcal Z_{h_1}(x,\cdot)|\mathcal Z_{h_2}(w_{zh_1},\cdot)) \\
& \qquad\qquad
+\delta F (\mathcal Z_{h_1}(x,\cdot)|\mathcal Z_{h_2}(w_{zh_1},\cdot))
T_{q_1,kh_1}^{(p)}(G)(\mathcal Z_{h_1}(x,\cdot))\big]\mathfrak{m}(dx)\\
&=-iq_2 \int_{C_0[0,T]}^{\mathrm{anf}_{q_2}} \langle{z,\mathcal Z_{h_2}(x,\cdot)}\rangle
 F (\mathcal Z_{h_1}(x,\cdot))T_{q_1,kh_1}^{(p)}(G)(\mathcal Z_{h_1}(x,\cdot)) \mathfrak{m}(dx).
\end{aligned}
\end{equation}
\end{theorem}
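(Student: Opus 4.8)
The plan is to obtain \eqref{eq:by-parts-gfft2} as an immediate consequence of the GFFT existence result (Theorem~\ref{thm:tpq-class}) and of the parts formula for the generalized analytic Feynman integral (Theorem~\ref{thm:by-parts-F}), exactly in the manner of the proof of Theorem~\ref{thmby-parts-gfft1}.

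First I would check the admissibility of the composite symbol $kh_1$. Since $k\in\mathrm{Supp}_{\infty}[0,T]$ and $h_1\in\mathcal{O}_{\mathrm{Supp}_{\infty}}(\mathcal{A})\subset\mathrm{Supp}_{\infty}[0,T]$, the product $kh_1$ lies in $\mathrm{Supp}_{\infty}[0,T]$; and by hypothesis $kh_1\in\mathcal{O}_{\mathrm{Supp}_{\infty}}(\mathcal{A})$, which is the same as saying $k\in\mathcal{O}_{\mathrm{Supp}_{\infty}}(\mathcal{A}h_1)$. As $G\in\mathcal B_{\mathcal A}(p;m)$ with $p\in[1,2]$, Theorem~\ref{thm:tpq-class} applied with $k$ replaced by $kh_1$ shows that $T_{q_1,kh_1}^{(p)}(G)$ exists for every $q_1\in\mathbb R\setminus\{0\}$ and is an element of $\mathcal B_{\mathcal A}(p';m)$, where $(1/p)+(1/p')=1$.

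Next I would set $G_1\equiv T_{q_1,kh_1}^{(p)}(G)$ and $R(x)\equiv F(x)G_1(x)$. Because $F\in\mathcal B_{\mathcal A}(p;m)$ and $G_1\in\mathcal B_{\mathcal A}(p';m)$ form a conjugate-exponent pair, Lemma~\ref{lemma:B} gives $R\in\mathcal B_{\mathcal A}(1;m)$, and Lemma~\ref{lemma:1} (applied with $w_z$ replaced by $w_{zh_1}$) gives $\delta_{h_1,h_2}R(\cdot|w_{zh_1})\in\mathcal B_{\mathcal Ah_1}(1;m-1)$. Hence all hypotheses of Theorem~\ref{thm:by-parts-F} are met for $F$, $G_1$, $z$, $h_1$, $w_{zh_1}$, $h_2$ and $q_2$, and invoking that theorem with $G$ replaced by $G_1=T_{q_1,kh_1}^{(p)}(G)$ produces precisely \eqref{eq:by-parts-gfft2}: the two variation terms on the left of \eqref{eq:by-parts-gfft2} are the two summands of $\delta R=F\,\delta G_1+\delta F\,G_1$ that occur on the left of \eqref{eq:by-parts-F}.

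I do not expect any genuine difficulty, since the statement is essentially a corollary of Theorems~\ref{thm:tpq-class} and \ref{thm:by-parts-F}. The one point requiring attention is the two-layer composition of Gaussian paths: the transform of $G$ must be taken with respect to the composite symbol $kh_1$ rather than $k$, so that after substituting $\mathcal Z_{h_1}(x,\cdot)$ into $T_{q_1,kh_1}^{(p)}(G)$ the relation \eqref{eq:basic-rel} delivers the arguments $\langle{\alpha_j kh_1,x}\rangle$ in the form demanded by Theorem~\ref{thm:by-parts-F}; and the orthogonality condition $kh_1\in\mathcal{O}_{\mathrm{Supp}_{\infty}}(\mathcal{A})$ is exactly what guarantees that $\mathcal A(kh_1)$ is orthogonal, so that both Theorem~\ref{thm:tpq-class} and Theorem~\ref{thm:by-parts-F} remain applicable to the composed functionals. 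All remaining steps are routine bookkeeping.
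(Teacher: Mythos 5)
Your proposal is correct and follows essentially the same route as the paper: define $R(x)=F(x)T_{q_1,kh_1}^{(p)}(G)(x)$, use Theorem \ref{thm:tpq-class} to place $T_{q_1,kh_1}^{(p)}(G)$ in $\mathcal B_{\mathcal A}(p';m)$, use Lemmas \ref{lemma:1} and \ref{lemma:B} to verify the product hypotheses, and then apply Theorem \ref{thm:by-parts-F} with $G$ replaced by $T_{q_1,kh_1}^{(p)}(G)$. Your write-up is merely a little more explicit than the paper's about the admissibility of the composite symbol $kh_1$, which is a harmless elaboration rather than a different argument.
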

\begin{proof}
 For $x\in C_0[0,T]$, let $R(x)= F (x)T_{q_1,kh_1}^{(p)}(G)(x)$.
Again, since $T_{q_1,kh_1}^{(p)}(G)$ is in $\mathcal B_{\mathcal A}(p';m)$ and
$F$ is in $\mathcal B_{\mathcal A}(p;m)$, it follows that $R$ belongs to $\mathcal B_{\mathcal A}(1;m)$,
and as a function of $x$, $\delta_{h_1,h_2}R(x|w_{zh_1})$ belongs to $\mathcal B_{\mathcal A}(1;m-1)$. 
Thus equation \eqref{eq:by-parts-gfft2} follows from Theorem \ref{thm:by-parts-F}
with $G$ replaced with $T_{q_1,kh_1}^{(p)}(G)$.
\qed\end{proof}

\par
Choosing $F=G$ in Theorem  \ref{thmby-parts-gfft1} 
above, we obtain the following corollary.


\begin{corollary}
Let $\mathcal A$, $m$, $z$, $h_1$, $w_{zh_1}$, and $F$ be as in Corollary  \ref{coro:condition-add}.
Then  for any functions  
$h_2, k\in\mathrm{Supp}_{\infty}[0,T]$ with $kh_1 \in  \mathcal{O}_{\mathrm{Supp}_{\infty}}(\mathcal{A})$
(or $k  \in  \mathcal{O}_{\mathrm{Supp}_{\infty}}(\mathcal{A}h_1 )$), 
 and  all $q_1,q_2 \in \mathbb R \setminus\{0\}$,
it follows that 
\[
\begin{aligned}
&\int_{C_0[0,T]}^{\mathrm{anf}_{q_2}}
 T_{q_1,kh_1}^{(2)}(F)(\mathcal Z_{h_1}(x,\cdot))
\delta T_{q_1,kh_1}^{(2)}(F)(\mathcal Z_{h_1}(x,\cdot)|\mathcal Z_{h_2}(w_{zh_1},\cdot))  
 \mathfrak{m}(dx)\\
&=-\frac{iq_2}{2} \int_{C_0[0,T]}^{\mathrm{anf}_{q_2}} \langle{z,\mathcal Z_{h_2}(x,\cdot)}\rangle
\big[T_{q_1,kh_1}^{(2)}(F)(\mathcal Z_{h_1}(x,\cdot))\big]^2 \mathfrak{m}(dx).
\end{aligned}
\]
\end{corollary}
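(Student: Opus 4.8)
The plan is to deduce this corollary as a direct specialization of Theorem~\ref{thmby-parts-gfft1}. Since the hypotheses on $\mathcal A$, $m$, $z$, $h_1$, $w_{zh_1}$, and $F$ here are exactly those of Corollary~\ref{coro:condition-add}, and hence of Theorem~\ref{thmby-parts-gfft1}, the only work is to substitute $G = F$ (with $g = f$, so that $G$ is the same cylinder functional $F \in \mathcal B_{\mathcal A}(2;m)$), and to take $q_1 = q_2$ in the three-parameter family of identities, renaming the surviving Feynman-integral parameter $q_3$ as $q_2$ to match the statement.

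First I would invoke Theorem~\ref{thmby-parts-gfft1} with $G$ replaced by $F$ and with $q_2$ (the transform parameter of $G$) set equal to $q_1$. The left-hand side of \eqref{eq:by-parts-gfft} then becomes
\begin{equation*}
\int_{C_0[0,T]}^{\mathrm{anf}_{q_3}}
\big[ T_{q_1,kh_1}^{(2)}(F)(\mathcal Z_{h_1}(x,\cdot))
\delta T_{q_1,kh_1}^{(2)}(F)(\mathcal Z_{h_1}(x,\cdot)|\mathcal Z_{h_2}(w_{zh_1},\cdot))
+ \delta T_{q_1,kh_1}^{(2)}(F)(\mathcal Z_{h_1}(x,\cdot)|\mathcal Z_{h_2}(w_{zh_1},\cdot))
T_{q_1,kh_1}^{(2)}(F) (\mathcal Z_{h_1}(x,\cdot))\big]\mathfrak{m}(dx),
\end{equation*}
and the two summands in the bracket are literally identical, so their sum is $2\, T_{q_1,kh_1}^{(2)}(F)(\mathcal Z_{h_1}(x,\cdot))\,\delta T_{q_1,kh_1}^{(2)}(F)(\mathcal Z_{h_1}(x,\cdot)|\mathcal Z_{h_2}(w_{zh_1},\cdot))$; here I use the product rule for the first variation, i.e. the fact that $\delta(FG)(x|w) = F(x)\delta G(x|w) + \delta F(x|w) G(x)$ applied to cylinder functionals, which was already exploited in the proof of Theorem~\ref{thm:by-parts-F}. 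Similarly, the right-hand side of \eqref{eq:by-parts-gfft} becomes $-iq_3 \int_{C_0[0,T]}^{\mathrm{anf}_{q_3}} \langle{z,\mathcal Z_{h_2}(x,\cdot)}\rangle \big[T_{q_1,kh_1}^{(2)}(F)(\mathcal Z_{h_1}(x,\cdot))\big]^2 \mathfrak{m}(dx)$. Dividing both sides by $2$ and renaming $q_3$ as $q_2$ yields exactly the asserted identity, with the convention $\stackrel{*}{=}$ inherited from Theorems~\ref{byparts-step1} and~\ref{thm:by-parts-F} guaranteeing that existence of either side forces existence of the other.

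I do not expect any genuine obstacle: the membership conditions needed to apply Theorem~\ref{thmby-parts-gfft1} ($T_{q_1,kh_1}^{(2)}(F) \in \mathcal B_{\mathcal A}(2;m)$ by Theorem~\ref{thm:tpq-class}, hence $R = [T_{q_1,kh_1}^{(2)}(F)]^2 \in \mathcal B_{\mathcal A}(1;m)$ by Lemma~\ref{lemma:B} with $p=p'=2$, and $\delta_{h_1,h_2}R(\cdot|w_{zh_1}) \in \mathcal B_{\mathcal Ah_1}(1;m-1)$ by Lemma~\ref{lemma:1}) are precisely the ones already verified in the proof of Theorem~\ref{thmby-parts-gfft1}, so no new estimates arise. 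The only point requiring a line of care is the coalescence of the two bracketed terms into twice one term, which is immediate once one notes that setting $q_1 = q_2$ and $G = F$ makes $T_{q_2,kh_1}^{(2)}(G) = T_{q_1,kh_1}^{(2)}(F)$ identically. Thus the proof is a one-paragraph deduction: ``Take $q_1 = q_2$ and $G = F$ in Theorem~\ref{thmby-parts-gfft1}; the two terms in the integrand on the left coincide, and dividing by $2$ gives the result.''
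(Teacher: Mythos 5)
Your proposal is correct and matches the paper's own (implicit) argument: the paper obtains this corollary precisely by choosing $F=G$ and identifying the two transform parameters in Theorem \ref{thmby-parts-gfft1}, so that the two bracketed terms coincide and a factor of $2$ can be divided out. No further comment is needed.
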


\par
Choosing $F=G$ in Theorem  \ref{thmby-parts-gfft2} 
above, we  also obtain the following corollary.


\begin{corollary}
Let $\mathcal A$, $p$, $m$, $F$, and $w_{zh_1}$ be as in Theorem \ref{thm:tpq}.
Then  for any functions  
$h_2, k\in\mathrm{Supp}_{\infty}[0,T]$ with $kh_1 \in  \mathcal{O}_{\mathrm{Supp}_{\infty}}(\mathcal{A})$
(or $k  \in  \mathcal{O}_{\mathrm{Supp}_{\infty}}(\mathcal{A}h_1 )$), 
 and  all $q_1,q_2\in \mathbb R \setminus\{0\}$,
it follows that 
\[
\begin{aligned}
&\int_{C_0[0,T]}^{\mathrm{anf}_{q_2}}
\big[ F (\mathcal Z_{h_1}(x,\cdot))
\delta T_{q_1,kh_1}^{(p)}(F )(\mathcal Z_{h_1}(x,\cdot)|\mathcal Z_{h_2}(w_{zh_1},\cdot)) \\
& \qquad\qquad
+\delta F (\mathcal Z_{h_1}(x,\cdot)|\mathcal Z_{h_2}(w_{zh_1},\cdot))
T_{q_1,kh_1}^{(p)}(F)(\mathcal Z_{h_1}(x,\cdot))\big]\mathfrak{m}(dx)\\
&=-iq_2 \int_{C_0[0,T]}^{\mathrm{anf}_{q_2}} \langle{z,\mathcal Z_{h_2}(x,\cdot)}\rangle
 F (\mathcal Z_{h_1}(x,\cdot))T_{q_1,kh_1}^{(p)}(F)(\mathcal Z_{h_1}(x,\cdot)) \mathfrak{m}(dx).
\end{aligned}
\]
\end{corollary}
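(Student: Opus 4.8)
The plan is to obtain this corollary as the diagonal case $G = F$ of Theorem \ref{thmby-parts-gfft2}. In that theorem both $F$ and $G$ are drawn from the same space $\mathcal B_{\mathcal A}(p;m)$ --- $F$ as in Theorem \ref{thm:tpq} and $G$ of the form \eqref{eq:cylinder-G} --- so it is legitimate to take $G = F$, writing $F$ simultaneously in the form \eqref{eq:cylinder-x} and in the form \eqref{eq:cylinder-G} with $g \equiv f$. All hypotheses of Theorem \ref{thmby-parts-gfft2} on $\mathcal A$, $p$, $m$, $z$, $h_1$, $h_2$, $k$, $q_1$, $q_2$ transfer verbatim, and under this substitution equation \eqref{eq:by-parts-gfft2} becomes exactly the displayed identity of the corollary; the conclusion that ``if either side exists, both sides exist and equality holds'' is inherited from Theorem \ref{thmby-parts-gfft2}, which itself rests on Theorem \ref{thm:by-parts-F} and hence on the general Cameron--Storvick theorem, Theorem \ref{byparts-step1}.

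It is worth observing --- in contrast with the $F = G$ corollaries deduced from Theorem \ref{thm:by-parts-F} and Theorem \ref{thmby-parts-gfft1}, where the bracketed integrand collapses to twice a single product and a factor $\frac{1}{2}$ appears --- that here the bracketed expression does \emph{not} simplify to $2F(\mathcal Z_{h_1}(x,\cdot))\,\delta T_{q_1,kh_1}^{(p)}(F)(\mathcal Z_{h_1}(x,\cdot)|\mathcal Z_{h_2}(w_{zh_1},\cdot))$, and no factor $\frac{1}{2}$ is present. The reason is that the two functionals $F(\mathcal Z_{h_1}(x,\cdot))$ and $T_{q_1,kh_1}^{(p)}(F)(\mathcal Z_{h_1}(x,\cdot))$ are in general distinct --- by Theorem \ref{thm:tpq-class} the transform $T_{q_1,kh_1}^{(p)}(F)$ lies in $\mathcal B_{\mathcal A}(p';m)$ rather than in $\mathcal B_{\mathcal A}(p;m)$ --- so the Leibniz-type identity $\delta(FG) = F\,\delta G + \delta F\,G$ underlying the proof of Theorem \ref{thm:by-parts-F} (see equation \eqref{eq:ev001}) does not reduce to a single term.

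There is no genuine obstacle to surmount here: the entire content of the corollary is already contained in Theorem \ref{thmby-parts-gfft2}. The only point requiring (routine) verification is the bookkeeping that the hypothesis space $\mathcal B_{\mathcal A}(p;m)$ prescribed for $F$ in Theorem \ref{thm:tpq} coincides with the one prescribed for $G$ in Theorem \ref{thmby-parts-gfft2}, so that the diagonal specialization $G = F$ falls within the scope of that theorem; once this is noted, the proof is complete in a single line.
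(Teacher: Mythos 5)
Your proposal is correct and takes exactly the paper's route: the paper obtains this corollary precisely by choosing $F=G$ in Theorem~\ref{thmby-parts-gfft2}, and your observation that the two terms do not collapse (so no factor $\tfrac12$ appears, since $T_{q_1,kh_1}^{(p)}(F)\ne F$ in general) is consistent with the stated conclusion.
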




\begin{thebibliography}{22}

\bibitem {Brue72}  
  M.D. Brue, 
  A Functional Transform for Feynman Integrals Similar to the Fourier Transform, 
  Ph.D. Thesis, University of Minnesota, Minneapolis, 1972.
\bibitem {Cameron45} 
  R.H. Cameron,
  Some examples of Fourier--Wiener transforms of analytic functionals,
  Duke  Math. J.     {12} (1945),  485--488.
\bibitem {Cameron51}
  R.H. Cameron,
  The first variation of an indefinite Wiener integral,
  Proc. Amer. Math. Soc.   {2}  (1951), 914--924.
\bibitem {Cameron54} 
  R.H. Cameron,
  The translation pathology of Wiener space,
  Duke  Math. J.     {21} (1954),  623--628.
\bibitem {CM45} 
  R.H. Cameron,  W.T. Martin,
  Fourier--Wiener transforms of analytic functionals,
  Duke  Math. J.      {12} (1945),  489--507.
\bibitem {CM47-bams} 
  R.H. Cameron,    W.T. Martin,
  The behavior of measure and measurability under change of scale in Wiener space,
  Bull. Amer. Math. Soc.  {53} (1947), 130--137.
\bibitem {CM47-Duke} 
  R.H. Cameron,  W.T. Martin,
  Fourier--Wiener transform of  functionals belonging to $L_2$ over the space $C$,
  Duke  Math. J.      {14} (1947),  99--107.
\bibitem {CS76} 
  R.H. Cameron,  D.A. Storvick,
  An $L_2$ analytic Fourier--Feynman transform,
  Michigan Math. J.      {23} (1976),  1--30.
\bibitem {CS83} 
  R.H. Cameron,   D.A. Storvick, 
  A simple definition of the Feynman integral, with applications, 
  Mem. Amer. Math. Soc. No. 288, Amer. Math. Soc., 1983.
\bibitem {CS85} 
  R.H. Cameron,  D.A. Storvick, 
  Sequential Fourier--Feynman transforms, 
  Annales Acad. Scient. Fenn.   {10} (1985), 107--111.
\bibitem {CS91}
  R.H. Cameron,  D.A.  Storvick,
  Feynman integral of variations of functionals,
  in  {Gaussian Random Fields} (eds. K. It\^o and T. Hida, 1990),
  Series on Probability and Statistics, vol. 1, World Scientific, Singapore, (1991), 144--157.
\bibitem {CCKSY05}
  K.S. Chang, D.H. Cho, B.S. Kim,  T.S. Song,   I. Yoo,
  Relationships involving generalized Fourier--Feynman transform, convolution and first variation,
  Integral Transforms Spec. Funct.     {16}  (2005), 391--405.
\bibitem {CCKSY08}
  K.S. Chang, D.H. Cho, B.S. Kim,  T.S. Song,   I. Yoo,
  Sequential Fourier--Feynman transform, convolution and first variation,     
  Trans. Amer. Math Soc.    {360}  (2008), 1819--1838.
\bibitem {CKY00-IT}
  K.S. Chang,   B.S. Kim,    I. Yoo,
  Fourier--Feynman transform, convolution and first variation of functional on abstract Wiener space,     
  Integral Transforms Spec. Funct.,   {10}  (2000), 179--200.
\bibitem {CKY00-NFAO} 
  K.S. Chang,  B.S. Kim,  I.  Yoo, 
  Integral transform and convolution of analytic functionals on abstract Wiener space,
  Numer. Funct. Anal. Optim.    {21} (2000), 97--105.
\bibitem {CC17-JMAA} 
  S.J. Chang,  J.G.    Choi,  
  Classes of Fourier--Feynman transforms on Wiener space,
  J. Math. Anal. Appl.    {449}  (2017),  979--993.
\bibitem {CC18-CPAA} 
  S.J. Chang,  J.G.    Choi,
  A Cameron--Storvick theorem for the analytic Feynman integral associated with Gaussian paths on a Wiener space and applications,
  Commun. Pure  Appl.  Anal.  {17}   (2018),  2225--2238.
\bibitem {CC18-BJMA} 
  S.J. Chang,  J.G.    Choi,
  Rotation of Gaussian paths on Wiener space with applications,
  Banach J. Math. Anal.  {12}   (2018),  651--672.
\bibitem {CC19-sub} 
  S.J Chang,  J.G. Choi,
  Generalized Fourier--Feynman transforms and generalized convolution products on Wiener space II,
  submitted.
\bibitem {CCC17} 
  S.J.  Chang, H.S. Chung,   J.G. Choi, 
  Generalized Fourier--Feynman transforms and generalized convolution products on Wiener space,
  Indag. Math.   {28} (2017), 566--579.
\bibitem {CCS10} 
  S.J. Chang, H.S.  Chung,  D.   Skoug, 
  Convolution products, integral transforms and inverse integral transforms of functionals in  $L_2(C_0[0,T])$, 
  Integral Transforms Spec. Funct.   {21} (2010), 143--151.
\bibitem {CSC12}
  J.G. Choi,  D. Skoug,  S.J.  Chang,
  A multiple generalized Fourier--Feynman transform via a rotation on Wiener space,
  Int. J. Math.   {23}   (2012),  Article ID: 1250068.
\bibitem {CPS93}
  D.M. Chung,   C. Park,  D. Skoug,
  Generalized  Feynman integrals via conditional Feynman integrals,
  Michigan Math. J.  {40}   (1993), 377--391.
\bibitem {Donsker}
  M.D. Donsker,
  On function space integrals,
  in  {Analysis in Function Space}  (eds. W.T. Martin  and  I. Segal),
                  MIT Press, Cambridge, Massachusetts,  (1964), 17--30.
\bibitem {folland}  
  G.B. Folland, 
  Real Analysis (2nd edition), 
  John Wiley \& Sons, 1999.
\bibitem {HPS95} 
  T.  Huffman, C.  Park, D.  Skoug,
  Analytic Fourier--Feynman transforms and convolution, 
  Trans. Amer.  Math. Soc.    {347} (1995), 661--673.
\bibitem {HPS96}  
  T.  Huffman, C.  Park,  D.  Skoug,
  Convolutions and Fourier--Feynman transforms of functionals involving multiple integrals,
  Michigan Math. J.    {43} (1996), 247--261.
\bibitem {HPS97} 
  T.  Huffman, C.  Park,  D.  Skoug,
  Generalized transforms and convolutions, 
  Int. J. Math. Math. Sci.     {20} (1997), 19--32.
\bibitem {HPS01} 
  T.  Huffman, C.  Park, D.  Skoug,
  Integration formulas involving Fourier--Feynman transforms via a Fubini theorem, 
  J. Korean Math. Soc.   {38} (2001), 421--435.
\bibitem {JS79-MMJ} 
  G.W. Johnson, D.L. Skoug,
  An $L_p$ analytic Fourier--Feynman transform,
  Michigan Math. J.     {26}  (1979), 103--127.
\bibitem {JS79-pacific} 
  G.W. Johnson,  D.L. Skoug, 
  Scale-invariant measurability in Wiener space,
  Pacific J. Math.   {83} (1979), 157--176.
\bibitem {JS81-Pacific} 
  G.W. Johnson, D.L. Skoug,
  Notes on the Feynman integral, I,
  Pacific J. Math.  {93} (1981), 313--324.
\bibitem {JS81-JFA}
  G.W. Johnson,  D.L. Skoug,
  Notes on the Feynman integral, II,
  J. Funct. Anal.  {41}  (1981), 277--289.
\bibitem {KK09} 
  B.J. Kim,  B.S. Kim,
  Integration by parts formulas for analytic Feynman integrals of unbounded functionals, 
  Integral Transforms Spec. Funct.     {20} (2009), 45--57.
\bibitem {KK11} 
  B.J. Kim,  B.S. Kim,
  Relationships among Fourier--Yeh--Feynman transform, convolution and the first variation on Yeh--Wiener space, 
  Honam Mathematical J.   {33} (2011), 207--221.
\bibitem {KKS04} 
  B.J. Kim,  B.S. Kim,   S. Skoug,
  Integral transforms, convolution products, and first variations,
  Int. J. Math. Math. Sci.   {11} (2004), 579--598.
\bibitem {KYC10} 
  B.S. Kim, I. Yoo,  D.H. Cho,
  Fourier--Feynman transforms of unbounded functionals on abstract Wiener space, 
  Cent. Eur. J. Math.   {8} (2010), 616--632.
\bibitem {KKPS99} 
  J.G. Kim, J.W. Ko, C. Park,  D. Skoug,
  Relationships among transforms, convolutions, and first variations, 
  Internat. J. Math.   Math. Sci.   {22} (1999), 191--204.
\bibitem {Leeyj82}
  Y.-J. Lee, 
  Integral transforms of analytic functions on abstract Wiener spaces, 
  J. Funct. Anal.   {47} (1982), 153--164.
\bibitem {Leeyj87}
  Y.-J. Lee, 
  Unitary operators on the space of L2-functions over abstract Wiener spaces, 
  Soochow J. Math.    {13} (1987), 165--174.
\bibitem {PWZ33}
  R.E.A.C. Paley,  N.   Wiener,   A.  Zygmund,
  Notes on random functions,
  Math. Z.  {37}  (1933),  647--668.
\bibitem {PS88}
  C. Park, D. Skoug,
  A note on Paley--Wiener--Zygmund stochastic integrals,
  Proc. Amer. Math. Soc.  {103}  (1988), 591--601.
\bibitem {PS91}
  C. Park,  D.  Skoug,
  A Kac--Feynman integral equation for conditional Wiener integrals,
  J. Integral Equations Appl.  {3}   (1991), 411--427.
\bibitem {PS95} 
  C. Park,  D.    Skoug,
  Generalized Feynman integrals: The $\mathcal{L}(L_2,L_2)$ theory,
  Rocky Mountain J. Math.  {25} (1995), 739--756.  
\bibitem {PS98-PanAmer} 
  C. Park, D. Skoug, 
  Integration by parts formulas involving analytic Feynman integrals, 
  PanAmer. Math. J.  {8} (1998),   1--11.
\bibitem {PSS98-RCMP}
  C. Park, D. Skoug, D. Storvick,
  Fourier--Feynman transforms and the first variation,
  Rend. Circ. Mat. Palermo (2),   {47}  (1998), 277--292.
\bibitem {PSS98-RMJ} 
  C. Park, D. Skoug, D. Storvick,
  Relationships among the first variation, the convolution product,  and the Fourier--Feynman transform, 
  Rocky Mount. J.  Math.  {28} (1998), 1447--1468.
\bibitem {Yamasaki} 
  Y. Yamasaki,
  Measures on infinite dimensional spaces,
  World Sci. Ser. Pure Math. 5, World Sci. Publishing, Singapore, 1985.
\bibitem {Yeh73}
  J. Yeh,
  Stochastic Processes  and the Wiener Integral,
  Marcel Dekker, Inc., New York, 1973.

\end{thebibliography}
\end{document}